\documentclass[11pt,twoside]{article}
\usepackage{a4}
\usepackage{amssymb,amsmath,amsthm,latexsym}
\usepackage{amsfonts}
\usepackage{amsfonts}
\usepackage{graphicx}
\usepackage{nicematrix}
\usepackage{blkarray}

\newtheorem{theorem}{Theorem}[section]

\newtheorem{corollary}[theorem] {Corollary}
\newtheorem{definition}[theorem]{Definition}
\newtheorem{example}[theorem]{Example}
\newtheorem{lemma} [theorem]{Lemma}

\newtheorem{proposition}[theorem]{Proposition}
\newtheorem{remark}[theorem]{Remark}

\usepackage{caption}
\usepackage{tikz}
\usepackage{kbordermatrix}

\voffset=-12mm
\mathsurround=2pt
\parindent=12pt
\parskip= 4.5 pt
\lineskip=3pt
\oddsidemargin=10mm
\evensidemargin=10mm
\topmargin=55pt
\headheight=12pt
\footskip=30pt
\textheight 8.1in
\textwidth=150mm
\raggedbottom
\pagestyle{myheadings}
\hbadness = 10000
\tolerance = 10000

\vspace{5cm}

\begin{document}

\label{'ubf'}  
\setcounter{page}{1}                                 

\markboth {\hspace*{-9mm} \centerline{\footnotesize \sc
	On the circuits of splitting matroids representable over $GF(p)$}
}
{ \centerline                           {\footnotesize \sc  
	Prashant Malavadkar$^1$, Uday Jagadale$^2$ and Sachin Gunjal$^3$                                                 } \hspace*{-9mm}              
}

\begin{center}
{ 
	{\Large \textbf { \sc On the circuits of splitting matroids representable over $GF(p)$
		}
	}
	\\

	\medskip

	{\sc Prashant Malavadkar$^1$, Uday Jagadale$^2$ and Sachin Gunjal$^3$}\\
	{\footnotesize  School of Mathematics and Statistics, MIT-World Peace University,
		Pune 411 038,	India. }\\
	{\footnotesize e-mail: {\it 1. prashant.malavadkar@mitwpu.edu.in, 2. uday.jagdale@mitwpu.edu.in,  \\3. sachin.gunjal@mitwpu.edu.in}}
}
\end{center}

\thispagestyle{empty}

\hrulefill

\begin{abstract}  
{\footnotesize 
\noindent We extend the splitting operation from  binary matroids (Raghunathan et al., 1998) to $p$- matroids, where $p$-matroids refer to matroids representable over $GF(p).$  We also characterize circuits, bases, and independent sets of the resulting matroid. Sufficient conditions to yield Eulerian $p$-matroids from Eulerian and non-Eulerian $p$-matroids by applying the splitting operation are obtained. A class of connected $p$-matroids that gives connected $p$-matroids under the splitting operation is characterized.}
\end{abstract}
\hrulefill

{\small \textbf{Keywords:} connected matroid; splitting operation; Eulerian matroid; circuits }

\indent {\small {\bf AMS Subject Classification:} 05B35; 05C50; 05C83 }

\section{Introduction}
Fleischner\cite{fle} characterized Eulerian graphs using the notion of splitting away a pair of edges from a vertex of degree at least three. Raghunathan et al.\cite{rsw} extended the splitting operation from graphs to binary matroids, and characterization of Eulerian binary matroids was obtained using this operation as follows: 


\begin{theorem}\label{the1}
A binary matroid $M$ on a set $E$ is Eulerian if and only if $M$ can be transformed by repeated applications of the splitting operation into a matroid in which $E$ is a circuit.
\end{theorem}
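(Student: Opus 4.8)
The plan is to translate everything into the language of the binary cycle space. Recall that for a binary matroid $M$ with $GF(2)$-representation $A$, the cycles of $M$ (disjoint unions of circuits) form a subspace $\mathcal{Z}(M) \subseteq GF(2)^{E}$, namely the subsets whose columns of $A$ sum to $\mathbf{0}$; the circuits are exactly the minimal nonempty members of $\mathcal{Z}(M)$, and every nonempty cycle is a disjoint union of circuits. Two reformulations drive the whole argument. First, $M$ is Eulerian if and only if $E$ is a disjoint union of circuits, i.e. if and only if $E \in \mathcal{Z}(M)$. Second, $E$ is a circuit of $M$ if and only if $\mathcal{Z}(M) = \{\emptyset, E\}$, equivalently $\dim \mathcal{Z}(M) = 1$ together with $E \in \mathcal{Z}(M)$.

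The key computational step is to describe the cycle space after a split. The splitting of $M$ at a pair $\{a,b\} \subseteq E$ is represented by adjoining to $A$ one new row carrying $1$'s in columns $a$ and $b$ and $0$'s elsewhere. Hence a set $Z$ is a cycle of $M_{a,b}$ precisely when its columns sum to $\mathbf{0}$ in $A$ and also in the new coordinate, the latter condition saying $|Z \cap \{a,b\}|$ is even. Writing $\phi(Z) = |Z \cap \{a,b\}| \bmod 2$, the map $\phi : \mathcal{Z}(M) \to GF(2)$ is linear (intersection size is additive modulo $2$ under symmetric difference), and I obtain $\mathcal{Z}(M_{a,b}) = \ker \phi \subseteq \mathcal{Z}(M)$. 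In particular a split can only shrink the cycle space, by codimension $0$ or $1$, and since $a,b \in E$ we always have $\phi(E) = 0$, so $E \in \mathcal{Z}(M)$ forces $E \in \mathcal{Z}(M_{a,b})$: the property ``$E$ is a cycle'' is preserved by every split.

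For the ``if'' direction I would argue backwards along these inclusions. If a sequence of splits carries $M$ to a matroid $M'$ in which $E$ is a circuit, then $E \in \mathcal{Z}(M')$, and iterating $\mathcal{Z}(M_{a,b}) \subseteq \mathcal{Z}(M)$ gives $\mathcal{Z}(M') \subseteq \mathcal{Z}(M)$; hence $E \in \mathcal{Z}(M)$ and $M$ is Eulerian. For the ``only if'' direction I would induct on $d = \dim \mathcal{Z}(M)$, assuming $E \in \mathcal{Z}(M)$. If $d = 1$ then $\mathcal{Z}(M) = \{\emptyset, E\}$ and $E$ is already a circuit. If $d \geq 2$ there is a cycle $Z$ with $\emptyset \neq Z \subsetneq E$; choosing $a \in Z$ and $b \in E \setminus Z$ gives $\phi(Z) = 1$, so $\phi \not\equiv 0$ and $\dim \mathcal{Z}(M_{a,b}) = d - 1$, while $E \in \mathcal{Z}(M_{a,b})$ as noted. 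Applying the inductive hypothesis to $M_{a,b}$ and prepending this single split completes the reduction.

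I expect the only genuinely delicate point to be the justification of the cycle-space description of the split, together with the standard but essential fact that a set is a cycle exactly when it is a disjoint union of circuits (so that ``Eulerian'' and ``$E \in \mathcal{Z}(M)$'' really coincide); the dimension count and the induction are then routine. Degenerate situations ($E$ very small, or loops among $a,b$) should be checked separately but cause no difficulty, since the linear-algebraic identities above hold verbatim.
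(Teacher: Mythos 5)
Your argument is correct, but note that the paper itself gives no proof of this statement: Theorem \ref{the1} is quoted as background from Raghunathan, Shikare and Waphare \cite{rsw}, so there is no in-paper proof to match against. Your cycle-space formulation is a clean and complete route. The identifications you rely on are all standard and correctly used: for a binary matroid the sets of columns summing to $\mathbf{0}$ are exactly the disjoint unions of circuits, so ``Eulerian'' is ``$E\in\mathcal{Z}(M)$'' and ``$E$ is a circuit'' is ``$\mathcal{Z}(M)=\{\emptyset,E\}$''; adjoining the splitting row intersects $\mathcal{Z}(M)$ with the kernel of the linear functional $Z\mapsto |Z\cap\{a,b\}| \bmod 2$; and $\phi(E)=0$ guarantees $E$ survives every split. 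The induction on $\dim\mathcal{Z}(M)$, choosing $a$ inside and $b$ outside a proper nonempty cycle to force the codimension to drop by exactly one, terminates correctly at dimension $1$. For comparison, the original argument in \cite{rsw} works with a disjoint circuit decomposition $E=C_1\cup\cdots\cup C_k$ and inducts on $k$, choosing $a\in C_1$, $b\in C_2$ so that $C_1\cup C_2$ becomes a cycle of $M_{a,b}$ and the decomposition shrinks; that is the same linear algebra read combinatorially, though one must be slightly careful there that $C_1\cup C_2$ need only be a cycle (not necessarily a circuit) of $M_{a,b}$, a subtlety your dimension count sidesteps entirely. The only points worth making explicit in a final write-up are the two facts you flag yourself (cycles are disjoint unions of circuits, and the kernel description of $\mathcal{Z}(M_{a,b})$) and the trivial exclusion of $E=\emptyset$.
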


\noindent A sufficient condition was provided by Shikare \cite{shi} to obtain connected binary matroids from a $4$-connected binary matroids using the splitting operation:

\begin{theorem}
If $M$ is a $4$-connected binary matroid on at least $9$ elements and $a, b$ are distinct elements of $M,$ then $M_{a,b}$ is a connected binary matroid.
\end{theorem}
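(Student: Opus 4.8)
The plan is to argue entirely with the $GF(2)$-representation and the connectivity function, reducing connectivity of $M_{a,b}$ to the absence of a single small cocircuit. Represent $M$ by a matrix $A$ over $GF(2)$ with columns indexed by $E$, write $u_x$ for the unit vector supported on $x \in E$, and recall that $M_{a,b}$ is represented by the matrix $A'$ obtained from $A$ by adjoining one extra row $z = u_a + u_b$ (entries $1$ in columns $a, b$ and $0$ elsewhere). First I would record two easy facts. Since $M$ is $4$-connected on at least $9$ elements it is in particular $3$-connected, hence has no cocircuit of size at most $2$; therefore $z$ does not lie in the row space of $A$, and $r(M_{a,b}) = r(M) + 1$. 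Also, adjoining a row can only raise the rank of a set of columns, so $r_{M_{a,b}}(S) \ge r_M(S)$ for every $S \subseteq E$, where $r_M, r_{M_{a,b}}$ denote the respective rank functions.

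Next I would show that every $1$-separation of $M_{a,b}$ is trivial. Suppose $(X, Y)$ partitions $E$ into nonempty parts with $\lambda_{M_{a,b}}(X) := r_{M_{a,b}}(X) + r_{M_{a,b}}(Y) - r(M_{a,b}) = 0$. Then $r_{M_{a,b}}(X) + r_{M_{a,b}}(Y) = r(M) + 1$, and combining with $r_{M_{a,b}}(\cdot) \ge r_M(\cdot)$ gives $\lambda_M(X) = r_M(X) + r_M(Y) - r(M) \le 1$. Since $M$ is $3$-connected, it is connected and has no $2$-separation, so a partition into nonempty parts with $\lambda_M(X) \le 1$ must have a singleton part, say $Y = \{e\}$; that is, $e$ is a loop or a coloop of $M_{a,b}$. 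Hence $M_{a,b}$ is connected if and only if it has no loops and no coloops.

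Then I would rule out both. The column of any $e \ne a, b$ in $A'$ equals its column in $A$, so a loop of $M_{a,b}$ outside $\{a, b\}$ would be a loop of $M$, impossible as $M$ is connected; and the columns of $a, b$ are nonzero because of their entry in $z$. For coloops I would pass to the cocycle space: the row space of $A'$ is the row space of $A$ together with $z$, so $e$ is a coloop of $M_{a,b}$ exactly when $u_e$ or $u_e + u_a + u_b$ belongs to the cocycle space of $M$. The first option says $e$ is a coloop of $M$, which cannot happen; for $e \in \{a, b\}$ the second option likewise reduces to a coloop of $M$; and for $e \ne a, b$ the second option says that $\{e, a, b\}$ supports a cocycle, which—as $M$ has no cocircuit of size at most $2$—means $\{e, a, b\}$ is itself a triad (a $3$-element cocircuit) of $M$. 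Thus $M_{a,b}$ has a coloop if and only if $M$ has a triad containing both $a$ and $b$.

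Finally I would invoke $4$-connectivity. If $T = \{e, a, b\}$ were a triad, then $E \setminus T$ is a hyperplane and $\lambda_M(T) = r(T) - 1 \le 2$, so $(T, E \setminus T)$ is a $k$-separation with $k \le 3$ as soon as $|E \setminus T| \ge 3$, which holds since $|E| \ge 9$. This contradicts the $4$-connectivity of $M$, so no such triad exists; hence $M_{a,b}$ has neither loops nor coloops and is therefore connected. I expect the main obstacle to be the structural reduction in the second and third steps—proving that any $1$-separation of $M_{a,b}$ must be trivial, and then translating ``coloop of $M_{a,b}$'' into the triad condition via the cocycle space—whereas the closing appeal to $4$-connectivity is routine, and the hypothesis $|E| \ge 9$ is comfortably more than the argument actually needs.
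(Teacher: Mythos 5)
Your argument is correct, but note that the paper does not prove this statement at all: it is quoted as background from Shikare's \emph{Splitting Lemma for binary matroids} (reference [shi]), so there is no internal proof to compare against. On its own merits your proof is sound and self-contained. The chain of reductions works: $r(M_{a,b})=r(M)+1$ because a cocycle supported on $\{a,b\}$ would violate $3$-connectivity; the inequality $r_{M_{a,b}}(S)\ge r_M(S)$ turns a $1$-separation of $M_{a,b}$ into a partition with $\lambda_M\le 1$, which by $3$-connectivity forces a singleton side, i.e.\ a loop or coloop of $M_{a,b}$; loops are ruled out trivially; and the cocycle-space computation correctly identifies a coloop of $M_{a,b}$ with either a coloop of $M$ or a triad of $M$ containing $\{a,b\}$ (the case $e\in\{a,b\}$, where $u_e+z$ collapses to a unit vector, is handled). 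The final appeal to $4$-connectivity kills the triad since $|E\setminus T|\ge 3$. Your closing observation is also right: this argument needs only $|E|\ge 6$, so it slightly strengthens the quoted theorem; the $|E|\ge 9$ hypothesis presumably reflects the needs of the original circuit-based proof rather than anything essential. One cosmetic point: you invoke ``$3$-connected $\Rightarrow$ no cocircuit of size at most $2$,'' which implicitly uses $|E|\ge 4$; that is covered by the hypothesis but is worth flagging as the place where a size assumption genuinely enters.
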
 

\noindent The splitting operation and its various properties were discussed in \cite{g, ml, mal, PMT, ma, ba, wag, wu}. 

\noindent In this paper, simple and coloopless matroids representable over $GF(p)$ are discussed. A matroid $M$ is called a $p$-matroid if $M \cong M[A],$ where $M[A]$ is the vector matroid of matrix $A$ of size $m \times n$ over the field $F = GF(p)$ for some prime $p$. We denote the set of column labels of $M$ (viz. the ground set of $M$) by $E$, the set of circuits of $M$ by $\mathcal {C}(M),$ and the set of independent sets of $M$ by $\mathcal {I}(M)$. For undefined and standard terminologies, refer to Oxley \cite{ox}.

\noindent In the present paper, we define the splitting operation on $p$-matroids and give a characterization of the circuits of the resulting $p$-matroid. We give a sufficient condition for the $p$-matroid to be connected after the splitting operation. A sufficient condition to yield Eulerian $p$-matroid from Eulerian $p$-matroid after the splitting operation is also obtained.

	

\section{Splitting Operation on $p$-Matroids }
	
Let $M$ be a $p$-matroid on ground set $E$, $\{a,b\} \subset E $ and $\alpha$ is a non-zero element of $GF(p)$. We define splitting operation on a $p$-matroid $M$ as follows. 

\begin{definition}\label{def1}
	Let $M\cong M[A]$ be a $p$-matroid on ground set $E,$$\{a,b\} \subset E,$ and $\alpha\neq 0$ in $ GF(p)$. The matrix $A_{a,b}$ is constructed  from $A$ by appending an extra row to $A$ which has coordinates equal to $\alpha$ in the columns corresponding to the elements $a,$$b$ and zero elsewhere. Define the splitting matroid $M_{a,b}$ to be the vector matroid  $M[A_{a,b}]$. The transformation of $M$ to $M_{a,b}$ is called the splitting operation.
\end{definition}

\begin{example}\label{ex1}
Consider the vector matroid $M[A]$ of a matrix $A$ over a field $GF(3)$.
	
\begin{center}
	
	$\mathbf{A} = 
	\begin{pNiceMatrix}%
		[first-col,
		first-row,
		code-for-first-col = \color{black},
		code-for-first-row = \color{black}]
		& 1 & 2 & 3 & 4 & 5 & 6 & 7 & 8   \\
		&  1 & 0 & 1 & 0 & 0 & 0 & 1 & 1  \\
		&  0 & 1 & 1 & 0 & 0 & 0 & 1 & 1  \\
		&  0 & 0 & 0 & 1 & 0 & 1 & 1 & 1  \\
		&  0 & 0 & 0 & 0 & 1 & 1 & 1 & 0  \\
	\end{pNiceMatrix} \qquad
\mathbf{A_{3,5}} = 
\begin{pNiceMatrix}%
	[first-col,
	first-row,
	code-for-first-col = \color{black},
	code-for-first-row = \color{black}]
	& 1 & 2 & 3 & 4 & 5 & 6 & 7 & 8   \\
	&  1 & 0 & 1 & 0 & 0 & 0 & 1 & 1  \\
	&  0 & 1 & 1 & 0 & 0 & 0 & 1 & 1  \\
	&  0 & 0 & 0 & 1 & 0 & 1 & 1 & 1  \\
	&  0 & 0 & 0 & 0 & 1 & 1 & 1 & 0  \\
	&  0 & 0 & 2 & 0 & 2 & 0 & 0 & 0  \\
	
\end{pNiceMatrix}$
\end{center}	

\noindent For $a=3$, $b=5$ and $\alpha=2,$ the matrix $A_{3,5},$ represents the splitting matroid $M_{3,5}.$ 

%
%
\noindent The collection of circuits of $M$ and $M_{3,5}$ is given in the following table.
	
		\begin{tabular}{ | m{7cm}| m{7cm} | }
			\hline
			\textbf{~~~~~~~~~~~~~~Circuits of $M$} & \textbf{~~~~~~~~~~~~~~Circuits of $M_{3,5}$} \\ 
			\hline
			$\{1, 2, 4, 8\},\{1, 2, 6, 7\},\{3, 5, 6, 8\},\{4, 6, 7, 8\}$ & $\{1, 2, 4, 8\},\{1, 2, 6, 7\},\{3, 5, 6, 8\},\{4, 6, 7, 8\}$  \\ 
			
			\hline
			$\{1, 2, 4, 5, 7\},\{3, 4, 5, 7\},\{3, 4, 8\}$ & $\{1, 2, 3, 4, 5, 7\},\{3, 4, 5, 6, 7\},\{3, 4, 5, 7, 8\}$ \\ 
			
			\hline
			$\{1, 2, 3\},\{4, 5, 6\},\{5, 7, 8\},\{3, 6, 7\}$ & $\{1, 2, 3, 4, 5, 6\},\{1, 2, 3, 5, 7, 8\}$ \\ 
			
			\hline
			$\{1, 2, 5, 6, 8\}$ & $-$ \\ 
			
			\hline

		\end{tabular}
	
\end{example}

\noindent It is interesting to observe that unlike the splitting in binary matroids, circuits $\{1, 2, 3, 4, 5, 7\},$ $\{3, 4, 5, 6, 7\},$ $\{3, 4, 5, 7, 8\}$ of $M_{3,5}$ are neither circuits of $M$ nor disjoint union of circuits of $M.$

\begin{remark}
	Let $\{a,b\}$ be a cocircuit of $M.$ Then $M_{a,b}\cong M.$ 
\end{remark}

\begin{remark}
	Observe that rank$(A)\le$ rank$(A_{a,b})\le$ rank$(A)+1,$ which implies that $rank(M)\le rank(M_{a,b})\le rank(M) + 1.$
\end{remark}

\begin{remark}
	When $p=2$, the splitting operation coincides with the splitting operation for binary matroids introduced by Raghunathan et al.\cite{rsw}. 
\end{remark}

\noindent In the following discussion, we describe circuits of $M_{a,b}$ with respect to circuits of $M.$ 
\begin{lemma}
 	If $C\in \mathcal{C}(M)$ and $C\cap\{a,b\}=\phi,$ then C is a circuit of $M_{a,b}$.
\end{lemma}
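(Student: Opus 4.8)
The plan is to argue directly from the vector-matroid characterization of circuits: for any matrix $B$, a subset $C$ of the ground set is a circuit of $M[B]$ precisely when the columns of $B$ indexed by $C$ are linearly dependent while the columns indexed by every proper subset of $C$ are linearly independent. So I would need to verify two things for $A_{a,b}$ rather than $A$, namely the dependence of the column set $C$ and the independence of each of its proper subsets.

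The key observation is structural. Since $C \cap \{a,b\} = \phi$, none of the columns of $A_{a,b}$ indexed by $C$ has a nonzero entry in the newly appended row: by Definition~\ref{def1} that row carries the value $\alpha$ only in columns $a$ and $b$ and is zero elsewhere. Hence the submatrix of $A_{a,b}$ formed by the columns in $C$ is exactly the corresponding submatrix of $A$ with one extra zero coordinate appended to each column. First I would record this fact explicitly, as the whole proof turns on it.

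Next I would invoke the elementary observation that appending a zero row to a matrix leaves all linear dependence relations among its columns unchanged: a linear combination of the original columns vanishes if and only if the same combination of the lengthened columns vanishes, the extra coordinate contributing $0$ on both sides. Applying this in both directions, the columns of $A_{a,b}$ indexed by $C$ are linearly dependent (since those of $A$ are, $C$ being a circuit of $M$), and the columns indexed by any proper subset $C' \subsetneq C$ are linearly independent (since those of $A$ are, again because $C$ is a circuit of $M$). These are precisely the two conditions defining $C$ as a circuit of $M_{a,b} = M[A_{a,b}]$.

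I expect no genuine obstacle: the entire argument rests on the disjointness $C \cap \{a,b\} = \phi$ forcing the appended row to vanish on $C$, after which the circuit condition transfers verbatim from $M$ to $M_{a,b}$. The only point that deserves a moment of care is to confirm \emph{both} halves of the circuit definition—dependence of $C$ together with minimality, i.e.\ independence of every proper subset—rather than the dependence alone.
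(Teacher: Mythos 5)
Your proposal is correct and rests on the same observation as the paper's proof: since $C\cap\{a,b\}=\phi$, the appended row vanishes on the columns of $C$, so dependence of $C$ transfers from $A$ to $A_{a,b}$, and minimality follows because independence also transfers (the paper phrases this contrapositively, noting that any circuit $C'\subsetneq C$ of $M_{a,b}$ would be a dependent set of $M$ properly contained in the circuit $C$). The two arguments are logically equivalent, so this is essentially the paper's proof.
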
 
	
\begin{proof}
	Observe that $C$ is a dependent set of $M_{a,b}$. If $C$ is not a circuit of $M_{a,b},$ then $C$ contains a circuit $C'$ of $M_{a,b}$. Note that $C'$ is a dependent set of $M$ and $C'\subset C$ which is not possible. Therefore $C$ is a circuit of $M_{a,b}.$
\end{proof}

\noindent Let $C_k=\{u_1, u_2,\ldots, u_l : u_i \in E, i=1,2,\dots,l \}$  be a circuit of $M$ for some positive integers $k,$ $l$ and $|C_k \cap \{a,b\}|=2.$ Suppose, without loss of generality, $u_1=a$,$u_2=b$. If there are non-zero constants $a_1^k, a_2^k, ..., a_l^k$  in $GF(p)$ such that $\sum_{i=1}^{i=l} a_i^k u_i \equiv 0 (mod~p)$ and $a_1^k + a_2^k \equiv 0 (mod~p)$ then we call $C_{k}$ a $p$-circuit of $M.$ However, if $|C_k \cap \{a,b\}|=1$ or $|C_k \cap \{a,b\}|=2$ and $a_1^k + a_2^k\not\equiv 0 (mod \: p),$ then we call $C_{k}$ an $np$-circuit of $M$.
 
\noindent We denote $\mathcal {C}_0=\{C\in \mathcal {C}(M): C$ is a $p$-circuit or $C\cap \{a,b\}=\phi\}$.

\begin{remark}

\noindent Let $C$ and $\{a,b\}$ be a circuit and a cocircuit of a $p$-matroid $M,$ respectively. If $p=2,$ then every circuit of $M$ is a circuit of $M_{a,b}.$ But for $p>2$ every circuit of $M$ may not be a circuit of $M_{a,b}.$ Note that $|C\cap \{a,b\}| \neq 1,$ therefore $|C\cap \{a,b\}|$ is even. If $a,b \in C$ and $C$ is an $np$-circuit of $M,$ then $C\in \mathcal{I}(M_{a,b}).$

\end{remark}
   
\begin{lemma}\label{dep}
Let $M$ be a $p$-matroid on ground set $E$ and $\{a,b\}\subset E$. If $C_1$ and $C_2$ are disjoint $np$-circuits of $M$ such that $a \in C_1, b \in C_2,$ then $C_1 \cup C_2 $ is a dependent set of $M_{a,b}$.
\end{lemma}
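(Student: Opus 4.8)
The plan is to work directly with the augmented matrix $A_{a,b}$ and exhibit an explicit nontrivial linear dependence among the columns indexed by $C_1 \cup C_2$. Since $C_1, C_2$ are disjoint with $a \in C_1$ and $b \in C_2$, each of them meets $\{a,b\}$ in exactly one element, so the appended (last) row of $A_{a,b}$ contributes a single nonzero entry $\alpha$ within the block of columns coming from $C_1$ (namely in column $a$) and a single nonzero entry $\alpha$ within the block coming from $C_2$ (namely in column $b$). This is the feature I intend to exploit.

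First I would record the two circuit dependencies in $M$. As $C_1$ is a circuit of $M[A]$, there are nonzero scalars $\{c_u : u \in C_1\} \subset GF(p)$ with $\sum_{u \in C_1} c_u\, u = 0$ among the columns of $A$; likewise there are nonzero scalars $\{d_v : v \in C_2\}$ with $\sum_{v \in C_2} d_v\, v = 0$. Write $c_a$ for the coefficient of $a$ and $d_b$ for the coefficient of $b$; both are nonzero.

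Next I would form, over the columns of $A_{a,b}$, the combination $\sum_{u \in C_1}(d_b c_u)\,u' + \sum_{v \in C_2}(-c_a d_v)\,v'$, where $u'$ denotes the column of $A_{a,b}$ corresponding to $u$. On the original $m$ rows this equals $d_b\cdot 0 - c_a\cdot 0 = 0$ by the two circuit relations. On the appended row only the columns $a$ and $b$ have a nonzero entry, giving $d_b c_a\,\alpha - c_a d_b\,\alpha = 0$. Hence the whole combination is the zero vector. Because $d_b$, $c_a$ and every $c_u$, $d_v$ are nonzero in the field $GF(p)$, all coefficients $d_b c_u$ and $-c_a d_v$ are nonzero, so this dependence is nontrivial and supported exactly on $C_1 \cup C_2$. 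Therefore $C_1 \cup C_2$ is a dependent set of $M_{a,b}$.

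There is no serious obstacle here; the argument is pure linear algebra, and the only point needing care is the bookkeeping that guarantees nontriviality: scaling the two circuit relations by the nonzero scalars $d_b$ and $-c_a$ keeps every coefficient nonzero while forcing the single new-row entry of each block to cancel. Note that disjointness together with $a \in C_1$, $b \in C_2$ already forces $|C_1 \cap \{a,b\}| = |C_2 \cap \{a,b\}| = 1$, so each circuit genuinely contributes exactly one entry $\alpha$ in the appended row; this is precisely what allows the cancellation to be achieved with a single pair of scalars.
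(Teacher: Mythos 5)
Your proof is correct and follows essentially the same route as the paper: both take the two circuit relations in $M$ and rescale them by nonzero field elements so that the coefficients of $a$ and $b$ sum to zero, which kills the appended row and yields a nontrivial dependence supported on $C_1 \cup C_2$ in $M_{a,b}$. The only cosmetic difference is that you multiply by $d_b$ and $-c_a$ where the paper multiplies by $c_a^{-1}$ and $-d_b^{-1}$; the cancellation mechanism is identical.
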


\begin{proof}
Let $C_1 = \{u_1,u_2,\ldots,u_l\}$ and $C_2= \{v_1, v_2,\ldots,v_k\}.$ Without loss of generality, assume $u_1=a$ and $v_1=b$.
Since $C_1$ and $C_2$ are circuits, there exist non-zero constants $\alpha_1,\alpha_2,\ldots,\alpha_l$ and $\beta_1,\beta_2,\ldots,\beta_k$ such that 

\begin{equation}\label{eq1}
 \alpha_1 u_1+ \alpha_2 u_2+ \ldots+\alpha_l u_l \equiv 0(mod\:p)
\end{equation}
\begin{equation}\label{eq2}
\beta_1 v_1+\beta_2 v_2+ \dots+\beta_k v_k \equiv 0(mod \:p).
\end{equation}
	
\noindent Multiplying equation (\ref{eq1}) by $\alpha_1^{-1}$ and equation (\ref{eq2}) by $-\beta_1^{-1}$ we get
\[u_1+ \alpha_2' u_2+...+\alpha_l' u_l \equiv 0 (mod \: p),\]
\[-v_1+\beta_2' v_2+...+\beta_k' v_k \equiv 0 (mod \: p).\]
	
\noindent Thus we get non-zero scalars $1, \alpha_i', i= 2,3,\ldots,l$ and $-1, \beta_j', j= 2,3,\ldots,k$ such that
	
\[u_1+ \alpha_2' u_2+...+\alpha_l' u_l+ (-1)v_1+\beta_2' v_2+...+\beta_k' v_k \equiv 0\] and 
\[ coeff. (u_1)+coeff.(v_1) \equiv 0 (mod \: p).\]
	
\noindent Therefore $C_1 \cup C_2$ is a dependent set of $M_{a,b}.$
\end{proof}

\begin{remark}\label{L1}	
\noindent Let $C_1$, $C_2$ be disjoint $np$-circuits of $M$ such that $a \in C_1, b \in C_2$ and $I \neq \phi$ be an independent set of $M.$ Then $C_1 \cup C_2 \cup I $ can not be a circuit of $M_{a,b}$ as it contains the dependent set $C_1 \cup C_2$ of $M_{a,b}$.	
\end{remark}

\noindent Consider subsets of $E$ of the type $C\cup I$ where $C=\{u_1,u_2,\ldots,u_l\}$ is an $np$-circuit of $M$ which is disjoint from an independent set $I=\{v_1,v_2,\ldots,v_k\}$ and $\{a,b\}\subset (C\cup I)$. We say $C\cup I$ is $p$-dependent if it contains no member of $\mathcal {C}_0$ and there are non-zero constants $\alpha_1,\alpha _2,\ldots,\alpha_l$ and $\beta_1, \beta_2,\ldots,\beta_k$ such that $\sum_{i=0}^{l}\alpha_i u_i + \sum_{j=0}^{k}\beta_j v_j = 0 (mod \: p)$ and $coeff.(a)+coeff.(b) = 0(mod \: p).$

\noindent Theorem \ref{circuits} characterizes the circuits of the splitting matroid $M_{a,b}$ with respect to circuits of $M.$

\begin{theorem}\label{circuits}
Let $M$ be a $p$-matroid on ground set $E$ and $\{a,b\}\subset E(M)$. Then $\mathcal {C}(M_{a,b})=\mathcal{C}_0\cup \mathcal {C}_1\cup\mathcal {C}_2$ where
	
\begin{description}
	
\item $\mathcal {C}_0=\{C\in \mathcal {C}(M): C$ is a $p$-circuit or $C\cap \{a,b\}=\phi\}$;
		
\item $\mathcal{C}_1=$ Set of minimal members of $\{C \cup I : (C \cup I)$ is $p$-dependent and there are no disjoint $np$-circuits $C_1', C_2'$ such that $C_1' \cup C_2' \subset ( C\cup I)$\}; and
		
\item 	$\mathcal {C}_2=$Set of minimal members of $ \{C_1\cup C_2 : C_1, C_2 \in \mathcal {C}(M) , a\in C_1,$ $b\in C_2,$ $C_1\cap C_2=\phi$ and there is no $C\in \mathcal {C}_0\cup \mathcal {C}_1$ such that $C\subset C_1 \cup C_2$\}.
	
\end{description}	
\end{theorem}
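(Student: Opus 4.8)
The plan is to reduce everything to a single linear-algebraic criterion and then classify by the support of dependences. Since $A_{a,b}$ is obtained from $A$ by appending one row whose only nonzero entries (both equal to $\alpha\neq 0$) sit in the columns $a$ and $b$, a subset $S\subseteq E$ is dependent in $M_{a,b}$ exactly when the columns of $A$ indexed by $S$ admit a nontrivial $GF(p)$-dependence $\sum_{e\in S}\lambda_e\,e\equiv 0$ whose coefficients additionally satisfy $\lambda_a+\lambda_b\equiv 0\pmod p$ (with the convention $\lambda_a=0$ if $a\notin S$, and similarly for $b$). Two consequences of this criterion drive the whole proof. First, because $M_{a,b}$ is a vector matroid, a circuit of $M_{a,b}$ carries a dependence that is unique up to scaling and has full support; hence for a circuit $S$ the defining dependence involves every element of $S$ with a nonzero coefficient. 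Second, in any such dependence the relation $\lambda_a+\lambda_b\equiv 0$ forces $a$ to lie in the support iff $b$ does. I would record these two facts first.

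I would then prove the inclusion $\mathcal{C}(M_{a,b})\subseteq\mathcal{C}_0\cup\mathcal{C}_1\cup\mathcal{C}_2$ by taking a circuit $S$ of $M_{a,b}$ and splitting on $|S\cap\{a,b\}|$. As dependence in $M_{a,b}$ implies dependence in $M$, the set $S$ is dependent in $M$. If $S\cap\{a,b\}=\phi$, then $S$ is a circuit of $M$ disjoint from $\{a,b\}$, so $S\in\mathcal{C}_0$. If $|S\cap\{a,b\}|=1$, the full-support dependence of $S$ would put exactly one of $a,b$ in its support, contradicting the second fact above; so this case is vacuous. If $\{a,b\}\subseteq S$ and $S$ happens to be a circuit of $M$, then its unique dependence satisfies $\lambda_a+\lambda_b\equiv 0$, i.e.\ $S$ is a $p$-circuit and lies in $\mathcal{C}_0$. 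The remaining case, $\{a,b\}\subseteq S$ with $S$ dependent but not a circuit of $M$, is the substantive one: here $S$ properly contains a circuit $C$ of $M$, and minimality of $S$ in $M_{a,b}$ forces every such $C$ to be an $np$-circuit meeting $\{a,b\}$, since a circuit of $M$ that is a $p$-circuit or avoids $\{a,b\}$ would already belong to $\mathcal{C}_0$ and hence be a proper $M_{a,b}$-circuit inside $S$.

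The heart of the argument is the dichotomy in this last case. Either $S$ contains two disjoint circuits $C_1\ni a$ and $C_2\ni b$, or it does not. In the first alternative Lemma \ref{dep} shows that $C_1\cup C_2$ is dependent in $M_{a,b}$, and minimality forces $S=C_1\cup C_2$; together with the absence of any smaller $\mathcal{C}_0\cup\mathcal{C}_1$ member, this places $S\in\mathcal{C}_2$. In the second alternative I would fix any circuit $C\subseteq S$ and show that $I:=S\setminus C$ is independent in $M$: a circuit $C'\subseteq I$ would be disjoint from $C$, and it must meet $\{a,b\}$, for otherwise $C'\in\mathcal{C}_0$ sits properly inside $S$, contradicting minimality; but then $C$ and $C'$ are disjoint circuits jointly covering $\{a,b\}$, contradicting the standing assumption of this alternative. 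Thus $S=C\cup I$ with $C$ an $np$-circuit and $I$ independent; its unique full-support dependence witnesses $p$-dependence, $S$ contains no $\mathcal{C}_0$ member by minimality, and minimality as a dependent set places $S\in\mathcal{C}_1$.

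Finally I would establish the reverse inclusion, that every listed set is genuinely a circuit of $M_{a,b}$. For $\mathcal{C}_0$ this is immediate: a circuit of $M$ disjoint from $\{a,b\}$ is handled by the first lemma of this section, while a $p$-circuit is dependent via its defining $\lambda_a+\lambda_b\equiv 0$ relation and is minimal because each of its proper subsets is already independent in $M$. For $\mathcal{C}_1$ and $\mathcal{C}_2$, dependence in $M_{a,b}$ comes respectively from the $p$-dependence relation and from Lemma \ref{dep}; minimality follows from the classification just proved, since any proper $M_{a,b}$-circuit contained in such a set would have to be a $\mathcal{C}_0$ member (excluded by the defining conditions), a disjoint $np$-circuit pair or $\mathcal{C}_2$ member (likewise excluded), or a strictly smaller member of the same family (contradicting the passage to minimal members). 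I expect the main obstacle to be the dichotomy of the third paragraph---in particular, proving that $S\setminus C$ is independent and correctly exploiting the uniqueness and full support of the circuit dependence to pin down which elements carry nonzero coefficients; the bookkeeping that distinguishes $\mathcal{C}_1$ from $\mathcal{C}_2$ is where the characterization genuinely earns its three-part form.
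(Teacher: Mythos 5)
Your proposal is correct and follows essentially the same route as the paper's own proof: show the three families are dependent in $M_{a,b}$ (via the appended-row relation $\lambda_a+\lambda_b\equiv 0$ and Lemma \ref{dep}) and minimal, then classify an arbitrary circuit of $M_{a,b}$ according to whether it is already a circuit of $M$ and, if not, whether the complement of a contained $np$-circuit is independent (giving $\mathcal{C}_1$) or contains a second disjoint circuit (giving $\mathcal{C}_2$). You are in fact more explicit than the paper on two points it leaves implicit, namely the vacuity of the case $|S\cap\{a,b\}|=1$ via the full-support circuit dependence, and the minimality verifications in the reverse inclusion, which the paper dispatches as ``clear.''
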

\begin{proof}
By the Definition \ref{def1} and Lemma \ref{dep}, it is clear that every element of $\mathcal{C}_0\cup \mathcal {C}_1\cup\mathcal {C}_2$ is a circuit of $M_{a,b}.$ Conversely, let $C \in \mathcal{C}(M_{a,b}),$ then $C$ is a dependent set of $M$.
	
\noindent $\mathbf{Case~I:}$ If $C$ is a minimally dependent set of $M,$ then either $C\cap \{a,b\}=\phi$ or $C$ is a $p$-circuit of $M.$ That is, $C\in \mathcal {C}_0.$ 
	
\noindent $\mathbf{Case~II:}$ If $C$ is not a minimally dependent set of $M,$ then it contains a minimally dependent set, say $C_1.$ Note that $C_1\notin \mathcal{C}_0.$ Thus $C$ contains no member of $\mathcal {C}_0.$
		
\noindent If $a,b \in C_1,$ then $C\setminus C_1$ is an independent set of $M;$ otherwise $C\setminus C_1$ contains a circuit, say $C',$ such that $C' \in \mathcal {C}_0$ which is not possible. Thus $C=C_1\cup I$ where $C_1$ is an $np$-circuit, $I = C \setminus C_1 \in \mathcal{I}(M)$ and it contains no member of $\mathcal {C}_0.$ Therefore $C$ is a minimal $p$-dependent set of $M$ and it does not contain a disjoint union of two $np$-circuits. Thus $C \in \mathcal{C}_1.$
	
\noindent If $a\in C_1,$ $b\in C\setminus C_1$ and $C\setminus C_1$ is an independent set of $M,$ denoted by $I$, then $C=C_1\cup I$. Note that $C_1\cup I$ is a minimal $p$-dependent set of $M$ and it does not contain a disjoint union of two $np$-circuits. Thus $C \in \mathcal{C}_1.$
	
\noindent If $a\in C_1,$ $b\in C\setminus C_1$ and $C\setminus C_1$ is dependent set of $M$, then there is a circuit, say $C_2,$ contained in $C\setminus C_1$ and $b\in C_2.$ Therefore $C=C_1\cup C_2 \cup I$ where both $C_1$ and $C_2$ are $np$-circuits and $I= C \setminus (C_1 \cup C_2)$ is an independent set of $M.$ By Remark \ref{L1}, $I=\phi$. Further, $C=C_1\cup C_2$ does not contain $p$-dependent set of $M$ and a disjoint union of two $np$-circuits. Therefore $C \in \mathcal {C}_2.$
		
\end{proof}

\noindent In the Example \ref{ex1}, $\mathcal{C}(M_{3,5})=\mathcal{C}_0\cup \mathcal {C}_1\cup\mathcal {C}_2$ where 

$\mathcal{C}_0=\{\{1, 2, 4, 8\},\{1, 2, 6, 7\},\{3, 5, 6, 8\},\{4, 6, 7, 8\}\};$

$\mathcal {C}_1=\{\{1, 2, 3, 4, 5, 7\},\{3, 4, 5, 6, 7\},\{3, 4, 5, 7, 8\}\};$

$\mathcal {C}_2=\{\{1, 2, 3, 4, 5, 6\},\{1, 2, 3, 5, 7, 8\}\}.$

\subsection{Independent sets, Bases and Rank function of $M_{a,b}$}
Now we describe independent sets, bases and the rank function of $M_{a,b}$ in terms of independent sets, bases and the rank function of $M,$ respectively.

\noindent Let $\mathcal{I}_0=\mathcal{I}(M)$ and $\mathcal{I}_1=\{C\cup I : (C\cup I)$ is not $p$-dependent and it contains no union of two disjoint $np$-circuits\}.
		

\begin{proposition}
$\mathcal{I}(M_{a,b})=\mathcal{I}_0\cup \mathcal{I}_1$.
\end{proposition}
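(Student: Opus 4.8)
The plan is to characterize $\mathcal{I}(M_{a,b})$ as the complement, within the subsets of $E$, of the dependent sets of $M_{a,b}$, and to leverage Theorem~\ref{circuits} which already classifies $\mathcal{C}(M_{a,b})$. Recall that a set is independent in a matroid if and only if it contains no circuit. So my strategy is to show the two inclusions $\mathcal{I}_0\cup\mathcal{I}_1\subseteq\mathcal{I}(M_{a,b})$ and $\mathcal{I}(M_{a,b})\subseteq\mathcal{I}_0\cup\mathcal{I}_1$ by testing membership against the three families $\mathcal{C}_0,\mathcal{C}_1,\mathcal{C}_2$ of circuits.

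\begin{proof}
Recall that $X\in\mathcal{I}(M_{a,b})$ if and only if $X$ contains no member of $\mathcal{C}(M_{a,b})=\mathcal{C}_0\cup\mathcal{C}_1\cup\mathcal{C}_2$.

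First we show $\mathcal{I}_0\cup\mathcal{I}_1\subseteq\mathcal{I}(M_{a,b})$. Let $X\in\mathcal{I}_0=\mathcal{I}(M)$. Then $X$ contains no circuit of $M$, so in particular it contains no member of $\mathcal{C}_0$. Moreover, every member of $\mathcal{C}_1$ and $\mathcal{C}_2$ is a dependent set of $M$ (it is an $np$-circuit together with an independent set, or a union of two circuits), hence contains a circuit of $M$; since $X$ is independent in $M$ it can contain none of these either. Thus $X\in\mathcal{I}(M_{a,b})$. Now let $X=C\cup I\in\mathcal{I}_1$. By definition $X$ is not $p$-dependent and contains no union of two disjoint $np$-circuits. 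I must verify $X$ contains no member of $\mathcal{C}_0$, $\mathcal{C}_1$, or $\mathcal{C}_2$; the key observations are that $X$ contains no $p$-circuit or circuit avoiding $\{a,b\}$ (ruling out $\mathcal{C}_0$, using that $C$ is the only circuit of $M$ inside $X$ and it is an $np$-circuit), that failing $p$-dependence rules out membership of any minimal $p$-dependent set from $\mathcal{C}_1$, and that the absence of two disjoint $np$-circuits rules out $\mathcal{C}_2$.

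For the reverse inclusion $\mathcal{I}(M_{a,b})\subseteq\mathcal{I}_0\cup\mathcal{I}_1$, let $X\in\mathcal{I}(M_{a,b})$. If $X$ is independent in $M$, then $X\in\mathcal{I}_0$ and we are done. Otherwise $X$ is dependent in $M$, so it contains a minimal dependent set, i.e.\ a circuit $C$ of $M$. Since $X$ is independent in $M_{a,b}$, it contains no member of $\mathcal{C}_0$; hence $C$ must be an $np$-circuit meeting $\{a,b\}$. Writing $X=C\cup I$ with $I=X\setminus C$, I then argue that $X$ must fall into the form defining $\mathcal{I}_1$: it is not $p$-dependent (else it would contain a minimal $p$-dependent set from $\mathcal{C}_1$) and it contains no union of two disjoint $np$-circuits (else it would contain a member of $\mathcal{C}_2$, contradicting independence). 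Hence $X\in\mathcal{I}_1$, completing the proof.
\end{proof}

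The main obstacle I anticipate is the bookkeeping in the forward inclusion for $\mathcal{I}_1$: one must show cleanly that a set which is ``not $p$-dependent and free of two disjoint $np$-circuits'' genuinely avoids every minimal element of $\mathcal{C}_1$ and $\mathcal{C}_2$, since these families are defined via minimality and via the absence-conditions rather than directly. Care is needed to ensure that no smaller circuit of $M_{a,b}$ sneaks inside $X$ through the interaction of the appended splitting row with proper subsets of $C\cup I$; the cleanest route is to argue contrapositively, deducing from any hypothetical circuit of $M_{a,b}$ inside $X$ a violation of one of the two defining conditions of $\mathcal{I}_1$.
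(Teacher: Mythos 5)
Your overall strategy is the same as the paper's: establish $\mathcal{I}_0\cup\mathcal{I}_1\subseteq\mathcal{I}(M_{a,b})$ by checking against the circuit families $\mathcal{C}_0,\mathcal{C}_1,\mathcal{C}_2$, and prove the reverse inclusion by splitting on whether $X$ is independent in $M$ and extracting an $np$-circuit $C$ with $X=C\cup I$. The paper simply asserts the forward inclusion as ``clear''; your reverse inclusion matches the paper's argument step for step (the paper invokes Lemma~\ref{dep} where you invoke $\mathcal{C}_2$, a harmless difference since a union of two disjoint $np$-circuits is dependent in $M_{a,b}$ either way).

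There is, however, one genuine gap in your forward inclusion for $\mathcal{I}_1$. To rule out members of $\mathcal{C}_0$ inside $X=C\cup I$ you assert that ``$C$ is the only circuit of $M$ inside $X$.'' That is false in general: a circuit together with a disjoint independent set can have nullity greater than $1$ and so contain several circuits of $M$ (take a triangle of $M(K_4)$ together with two further edges at a new vertex). So you cannot conclude from the shape $C\cup I$ alone that $X$ avoids $p$-circuits and circuits disjoint from $\{a,b\}$. The condition that $C\cup I$ contains no member of $\mathcal{C}_0$ has to come from the definition of $p$-dependence itself (whose first clause is precisely ``contains no member of $\mathcal{C}_0$''), and here your instinct to verify the point explicitly exposes a real imprecision: as literally written, ``$C\cup I$ is not $p$-dependent'' is the negation of a conjunction, so it is satisfied by a set that \emph{does} contain a member of $\mathcal{C}_0$ but admits no suitable linear relation; such a set would lie in $\mathcal{I}_1$ while being dependent in $M_{a,b}$. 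To close the gap you should either read the definition of $\mathcal{I}_1$ as additionally requiring that $C\cup I$ contain no member of $\mathcal{C}_0$ (which is clearly the authors' intent, and what their ``clearly'' presumes), or state that requirement explicitly and then argue: any circuit of $M_{a,b}$ contained in $X$ lies in $\mathcal{C}_0\cup\mathcal{C}_1\cup\mathcal{C}_2$ by Theorem~\ref{circuits}, and each possibility is excluded by one of the three defining conditions on $X$. Your handling of $\mathcal{C}_1$ and $\mathcal{C}_2$ is fine once this is repaired.
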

\begin{proof}
Clearly $\mathcal{I}_0\cup \mathcal{I}_1 \subseteq \mathcal{I}(M_{a,b}). $ Conversely, assume $ S \in \mathcal{I}(M_{a,b}).$ If $S$ is an independent set of $M,$ then $S \in \mathcal{I}_0 $. If $S$ is dependent set of $M,$ then it contains an $np$-circuit of $M.$ In the light of Lemma \ref{dep}, $S$ does not contain union of two disjoint $np$-circuits. Therefore $S = C \cup I$  for some $C \in \mathcal{C}(M),$ $I \in \mathcal{I}(M)$ and $C \cup I$  is not $p$-dependent. Hence $S \in \mathcal{I}_1. $
	
\end{proof}

\noindent We use $r$ and $r'$ to denote the rank functions of matroid $M$ and $M_{a,b}$ respectively.
\begin{theorem}\label{k}
Let $M$ be a $p$-matroid and $\{a,b\} \subset E.$ If $M$ contains an $np$-circuit, then
$\mathcal{B}(M_{a,b})=\{B\cup e:B\in \mathcal{B}(M), e\notin B$ and $B\cup e$ contains neither $p$-circuit nor $p$-dependent set\}.	
\end{theorem}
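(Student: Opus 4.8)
The plan is to first fix the size of a basis of $M_{a,b}$ and then match the two families set by set, using the circuit characterization of Theorem~\ref{circuits} and the Proposition describing $\mathcal{I}(M_{a,b})$. The hypothesis forces the rank to jump by one: if $C_0$ is an $np$-circuit, its essentially unique dependency has $\mathrm{coeff}(a)+\mathrm{coeff}(b)\not\equiv 0\ (\mathrm{mod}\ p)$, so appending the row of Definition~\ref{def1} (whose only nonzero entries are $\alpha$ in columns $a,b$) does not annihilate this dependency; equivalently the new row is not orthogonal to the associated vector of $\ker A$, hence not in the row space of $A$. With the bound in the earlier Remark this yields $r'(M_{a,b})=r(M)+1$. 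Since appending a row preserves column independence, $\mathcal{I}(M)\subseteq\mathcal{I}(M_{a,b})$, so every basis of $M_{a,b}$ has exactly $r(M)+1$ elements, one more than a basis of $M$; this is the structural reason a basis of $M_{a,b}$ should have the form $B\cup e$.

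For the inclusion $\supseteq$, take $B\cup e$ from the right-hand side. It has the correct cardinality $r(M)+1=r'(M_{a,b})$, so it is enough to show it is independent in $M_{a,b}$, i.e. that it contains no member of $\mathcal{C}_0\cup\mathcal{C}_1\cup\mathcal{C}_2$. Because $B$ is a basis and $e\notin B$, the set $B\cup e$ contains exactly one circuit of $M$, the fundamental circuit $C=C(e,B)$, and $(B\cup e)\setminus C$ is independent in $M$. The assumption that $B\cup e$ contains no $p$-circuit forces $C$ to be an $np$-circuit, so $C\notin\mathcal{C}_0$ and $B\cup e$ contains no member of $\mathcal{C}_0$; since it has a single $M$-circuit it cannot contain two disjoint $np$-circuits, excluding $\mathcal{C}_2$; and since each member of $\mathcal{C}_1$ is $p$-dependent, the assumption that $B\cup e$ contains no $p$-dependent set excludes $\mathcal{C}_1$. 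Hence $B\cup e\in\mathcal{I}(M_{a,b})$, and by cardinality $B\cup e\in\mathcal{B}(M_{a,b})$.

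For the reverse inclusion, let $B'\in\mathcal{B}(M_{a,b})$; then $|B'|=r(M)+1$, so $B'$ is dependent in $M$. The key numerical step is that the $M$-nullity of $B'$ equals $1$: from $|B'|=r'(B')\le r(B')+1$ we get $r(B')\ge|B'|-1$, while dependence gives $r(B')\le|B'|-1$, hence $r(B')=|B'|-1$. A set of nullity one contains a unique circuit $C$ of $M$, and by the Proposition (membership in $\mathcal{I}_1$) this $C$ is an $np$-circuit. Taking any $e\in C$ and $B=B'\setminus e$ produces a basis $B$ of $M$ with $B'=B\cup e$ and $e\notin B$. Finally, since $B'$ is independent in $M_{a,b}$ it contains no member of $\mathcal{C}_0$, hence no $p$-circuit; and a $p$-dependent set is dependent in $M_{a,b}$ (its witnessing dependency survives the added row, as there $\mathrm{coeff}(a)+\mathrm{coeff}(b)\equiv 0$), so $B'$ can contain no $p$-dependent set either. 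Thus $B'=B\cup e$ lies in the right-hand side.

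The step I expect to be the main obstacle is the faithful translation between the combinatorial hypotheses of the statement and membership in $\mathcal{C}_0\cup\mathcal{C}_1\cup\mathcal{C}_2$. In particular one must check carefully that excluding $p$-circuits together with the unique-circuit structure really places the fundamental circuit in the complement of $\mathcal{C}_0$: a fundamental circuit disjoint from $\{a,b\}$ also belongs to $\mathcal{C}_0$ and would make $B\cup e$ dependent in $M_{a,b}$, so the condition should be read as requiring $C(e,B)$ to be an $np$-circuit. The nullity-one computation is the linchpin of the converse, since it is what guarantees a unique $M$-circuit and hence a well-defined element $e$ to delete.
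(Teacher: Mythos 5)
Your proof is correct and follows essentially the same route as the paper's: the forward inclusion by a cardinality count against $r'(M_{a,b})=r(M)+1$ together with a check that $B\cup e$ avoids $\mathcal{C}_0\cup\mathcal{C}_1\cup\mathcal{C}_2$, and the converse by locating the unique $M$-circuit inside a basis $B'$ of $M_{a,b}$ and deleting one of its elements. Your version is tighter in two places: you justify $r'(M_{a,b})=r(M)+1$ explicitly, via the non-orthogonality of the appended row to the kernel vector supported on an $np$-circuit, and your nullity-one computation gives the uniqueness of the circuit inside $B'$ cleanly, where the paper argues more loosely that $B=B'\setminus e$ must be independent; you also verify the ``no $p$-dependent set'' condition in the converse, which the paper omits. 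Your closing caveat is a genuine catch rather than an obstacle of your own making: as literally stated, the right-hand side of the theorem does not exclude the case where the fundamental circuit $C(e,B)$ is disjoint from $\{a,b\}$; such a circuit is not a $p$-circuit under the paper's definition, yet it lies in $\mathcal{C}_0$ and would make $B\cup e$ dependent in $M_{a,b}$. The hypothesis should indeed be read, as you do, as requiring $B\cup e$ to contain no member of $\mathcal{C}_0$ --- equivalently, that $C(e,B)$ be an $np$-circuit --- and the paper's own forward direction silently assumes this.
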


\begin{proof}
Let $B\in \mathcal{B}(M)$ and $e\notin B$. If $B\cup e$ contains no $p$-circuit and no $p$-dependent set, then $B\cup e$ is an independent set of $M_{a,b}.$ Moreover, $B\cup e$ is a maximal independent set of $M_{a,b}$ because $r'(M_{a,b}) \leq r(M)+1.$

\noindent Conversely, let $B'$ be a basis of $M_{a,b}.$ Note that $B'$ is an independent subset of $M$ if and only if $M$ contains no $np$-circuit. Since $M$ contains an $np$-circuit, $B'$ is a dependent set of $M.$ If $B'$ contains a $p$-circuit, then $B'$ becomes dependent set of $M_{a,b},$ a contradiction. Therefore $B'$ contains an $np$-circuit, say $C.$ Thus $B'= C\cup I$ where $I = B'\setminus C.$ Note that $I$ is an independent set of $M,$ otherwise there is a $np$-circuit, say $C_1,$ of $M$ contained in $I$ and $C\cup C_1$ gives a dependent subset of $B'$ in $M_{a,b},$ a contradiction. 

\noindent Let $e\in C$ and  $B=B'\setminus e=(C\setminus e)\cup I.$ Now if $B$ is a dependent set of $M,$ then $r[(C\setminus e)\cup I)]<r(M).$ It implies $r[(C\cup I)]<r(M)$ and $r'(C\cup I) \leq r(M).$ That is, $r'(B')\leq r(M),$ a contradiction. Therefore $B=B'\setminus e $ is an independent set of $M.$ Moreover, $|B|=r(M),$ which implies that $B$ is a basis of $M.$ Thus $B'=B\cup e.$ 
\end{proof}

\noindent In the following corollary, we provide the rank function of $M_{a,b}$ with respect to the rank function of $M.$
\begin{corollary}\label{s}
	 Suppose $S\subseteq E(M_{a,b}).$ Then
	\begin{equation}
		\begin{split}
			r'(S) &= r(S) , \text { ~~~~~  if S contains no np-circuit of M; and}\\
			&= r(S)+1,\text {~   if S contains an np-circuit of M.}
		\end{split}
	\end{equation}
\end{corollary}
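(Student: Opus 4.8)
The plan is to start from the elementary bound $r(S)\le r'(S)\le r(S)+1$, which holds for \emph{every} $S\subseteq E$ for exactly the reason given in the second Remark of Section~2: the columns of $A_{a,b}$ indexed by $S$ form the matrix $A|_S$ with a single extra row appended, so deleting that row cannot raise the column rank and keeping it can raise it by at most one. Thus it suffices, for each $S$, to decide whether the appended row raises the column rank of $A|_S$ by one or leaves it unchanged, and to match these two alternatives with the presence or absence of an $np$-circuit in $S$.

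First I would treat the case where $S$ contains no $np$-circuit of $M$ and show $r'(S)=r(S)$ by proving that the independent subsets of $S$ are the same in $M$ and in $M_{a,b}$. One inclusion is immediate, since $\mathcal{I}(M)=\mathcal{I}_0\subseteq\mathcal{I}(M_{a,b})$ by the Proposition, so $M$-independent subsets of $S$ stay independent in $M_{a,b}$. For the reverse inclusion, take $I\subseteq S$ with $I\in\mathcal{I}(M_{a,b})=\mathcal{I}_0\cup\mathcal{I}_1$; every member of $\mathcal{I}_1$ has the form $C\cup I'$ with $C$ an $np$-circuit, hence contains an $np$-circuit, which cannot occur inside an $np$-circuit-free $S$. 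Therefore $I\in\mathcal{I}_0=\mathcal{I}(M)$. Since the two families of independent subsets of $S$ coincide, their maximum cardinalities agree and $r'(S)=r(S)$.

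Next I would treat the case where $S$ contains an $np$-circuit $C$ and show $r'(S)=r(S)+1$; in view of the upper bound it is enough to prove that the rank strictly increases. Here I would pass to the representing matrices: the dependency witnessing $C$ yields a nonzero vector $x$ supported on $C\subseteq S$ with $A\,x\equiv 0\ (\mathrm{mod}\ p)$, and the defining property of an $np$-circuit is precisely that the coefficients at $a$ and $b$ satisfy $x_a+x_b\not\equiv 0\ (\mathrm{mod}\ p)$ (this includes the case where only one of $a,b$ lies in $C$, the other coefficient being $0$). Since the appended row $w$ equals $\alpha$ in the columns $a,b$ and $0$ elsewhere, $w\cdot x=\alpha(x_a+x_b)\not\equiv 0$ while $A|_S\,x=0$, so $w|_S$ is not orthogonal to the kernel of $A|_S$ and hence does not lie in the row space of $A|_S$. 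Appending a row outside the current row space raises the column rank, whence $r'(S)=r(S)+1$.

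I expect this second case to be the main obstacle, and the reason I would drop to the representing matrix rather than argue entirely inside the matroid is instructive: the naive attempt to exhibit an $M_{a,b}$-independent set of size $r(S)+1$ by augmenting $C$ with an $M$-independent set can fail, since such an augmentation may create a second disjoint $np$-circuit or a $p$-dependent set, pushing the resulting set out of $\mathcal{I}_1$. The single explicit dependency supplied by $C$ sidesteps this combinatorial case analysis and delivers the strict rank increase directly.
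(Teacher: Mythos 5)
Your proof is correct, but it takes a genuinely different route from the paper's. The paper derives both cases from the basis characterization of Theorem~\ref{k} applied to the restriction $M|_S$: it takes bases $B$ of $M|_S$ and $B'$ of $M_{a,b}|_S$, asserts $|B'|=|B|$ when $S$ contains no $np$-circuit, and invokes $B'=B\cup e$ when it does. You instead settle the first case via the independent-set characterization $\mathcal{I}(M_{a,b})=\mathcal{I}_0\cup\mathcal{I}_1$ (every member of $\mathcal{I}_1$ contains an $np$-circuit, so inside an $np$-circuit-free $S$ the two matroids have identical independent subsets), and the second case by a direct computation with the representing matrix: the circuit dependency $x$ of an $np$-circuit satisfies $x_a+x_b\not\equiv 0 \ (\mathrm{mod}\ p)$ in both the $|C\cap\{a,b\}|=1$ and $|C\cap\{a,b\}|=2$ subcases, so the appended row fails to be orthogonal to $\ker(A|_S)$, hence lies outside the row space of $A|_S$ and raises the rank by exactly one. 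What your argument buys is rigor precisely where the paper is loosest: Theorem~\ref{k} is stated for a matroid with $\{a,b\}$ in its ground set, and transferring it to $M|_S$ is delicate when only one of $a,b$ lies in $S$ (which can happen when the $np$-circuit in $S$ meets $\{a,b\}$ in a single element); your linear-algebra step sidesteps that issue entirely, and your first case supplies the justification that the paper compresses into the bare assertion $|B'|=|B|$. The paper's approach, in exchange, stays entirely at the matroid level and reuses machinery it has already built. Your closing observation---that augmenting $C$ combinatorially to an independent set of size $r(S)+1$ can fail by creating a second disjoint $np$-circuit or a $p$-dependent set---is a fair diagnosis of why the matrix argument is the cleaner path here.
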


\begin{proof}
Let $B$ and  $B'$ be the bases of $M|_S$ and $M_{a,b}|_S,$ respectively, then $r(S)=|B|$ and $r'(S)=|B'|.$ If $S$ contains no $np$-circuit, then $|B'|=|B|.$ Therefore $r(S)=r'(S).$ If $S$ contains an $np$-circuit, then by the Theorem \ref{k}, $B'=B \cup  e$ for some $B \in \mathcal{B}(M|_S),$$e\in (S\setminus B)$ and $B\cup e$ contains a unique $np$-circuit which is the fundamental circuit of $e$ with respect to $B.$ Therefore $r'(S)=r'(M_{a,b}|_S)=|B'|=|B\cup e|=|B|+1=r(S)+1.$
\end{proof}

\begin{corollary}\label{coc}
	Let $M$ be a $p$-matroid and $\{a,b\}\subset E$, then
	\begin{enumerate}
		\item $r'(M_{a,b}) = r(M)$ if and only if every circuit $C\in \mathcal{C}(M)$ is a $p$-circuit. Moreover, $M \cong M_{a,b}$.
		\item If M contains an $np$-circuit, then $r'(M_{a,b}) = r(M) + 1.$ 
	\end{enumerate}
	
\end{corollary}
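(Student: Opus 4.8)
The plan is to prove the two statements of Corollary~\ref{coc} by leveraging Corollary~\ref{s}, which gives the rank increment of the splitting operation in terms of the presence of $np$-circuits. Both parts hinge on the observation that the only way for $r'$ to exceed $r$ is the existence of an $np$-circuit, so the entire argument reduces to tracking when $M$ does or does not contain an $np$-circuit.

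For part (1), I would apply Corollary~\ref{s} with $S=E(M_{a,b})$. Suppose first that every circuit $C\in\mathcal{C}(M)$ is a $p$-circuit. I recall from the definitions that a $p$-circuit $C$ satisfies $|C\cap\{a,b\}|\in\{0,2\}$ with the coefficients of $a$ and $b$ summing to zero modulo $p$; in particular, $M$ contains no $np$-circuit. Hence by Corollary~\ref{s}, $r'(E)=r(E)$, giving $r'(M_{a,b})=r(M)$. Conversely, if $r'(M_{a,b})=r(M)$, then $E$ contains no $np$-circuit (otherwise Corollary~\ref{s} would force $r'(E)=r(E)+1$), so every circuit of $M$ is a $p$-circuit or disjoint from $\{a,b\}$; a circuit disjoint from $\{a,b\}$ is vacuously a $p$-circuit in the sense that it contributes no extra rank, so I would phrase this carefully to match the definition of $\mathcal{C}_0$. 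For the ``moreover'' claim $M\cong M_{a,b}$, I would argue that when no $np$-circuit exists, every dependent set of $M_{a,b}$ is already dependent in $M$ (the appended row imposes no new independence constraint that is violated), so $\mathcal{C}(M)=\mathcal{C}(M_{a,b})$ and the two matroids coincide on the common ground set $E$.

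For part (2), the argument is immediate: if $M$ contains an $np$-circuit, then this $np$-circuit is a subset of $E$, so $S=E$ contains an $np$-circuit of $M$, and Corollary~\ref{s} gives directly $r'(M_{a,b})=r'(E)=r(E)+1=r(M)+1$.

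The main obstacle I anticipate is the ``moreover'' statement $M\cong M_{a,b}$ in part (1). The rank equality alone does not automatically yield isomorphism; I need to show that the collections of circuits agree. The cleanest route is to verify $\mathcal{C}(M)=\mathcal{C}(M_{a,b})$ using Theorem~\ref{circuits}: when every circuit is a $p$-circuit (and no $np$-circuit exists), the families $\mathcal{C}_1$ and $\mathcal{C}_2$ are empty — $\mathcal{C}_1$ requires an $np$-circuit $C$ in its building block $C\cup I$, and $\mathcal{C}_2$ requires two disjoint $np$-circuits — so $\mathcal{C}(M_{a,b})=\mathcal{C}_0=\mathcal{C}(M)$. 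Since both matroids share the ground set $E$ and have identical circuit families, they are equal, hence certainly isomorphic. I would make sure the definition of $p$-circuit is being applied consistently, since that is the pivot on which the whole equivalence turns.
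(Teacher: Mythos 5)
Your proposal is correct and follows the route the paper intends: the paper states this corollary with no proof at all, leaving it as an immediate consequence of Corollary~\ref{s} applied to $S=E$, which is exactly what you do for both directions of part (1) and for part (2). Your treatment of the ``moreover'' clause via Theorem~\ref{circuits} (showing $\mathcal{C}_1=\mathcal{C}_2=\emptyset$ in the absence of $np$-circuits, hence $\mathcal{C}(M_{a,b})=\mathcal{C}_0=\mathcal{C}(M)$), and your observation that circuits disjoint from $\{a,b\}$ are not literally $p$-circuits under the paper's definition, supply details the paper glosses over and are both sound.
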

 
\begin{remark}
Let $M$ be a $p$-matroid and $\{a,b\}\subset E.$ If $M$ contains an $np$-circuit, then $\{a,b\}$ is a cocircuit of $M_{a,b}.$ Assume the contrary, that is, $\{a,b\}$ is not a cocircuit of $M_{a,b}.$ Then there is a basis $B'$ of $M_{a,b}$ such that $B'\cap \{a,b\}=\phi.$ By Theorem \ref{k}, $B'= B \cup e$  for some $B \in \mathcal{B}(M)$ and $e \in E\setminus B.$ Now $B'=B \cup e$ contains a unique $np$-circuit of $M$ which is a contradiction to the assumption that there is a basis $B'$ of $M_{a,b}$ such that $B'\cap \{a,b\}=\phi.$ Therefore, $\{a,b\}$ is a cocircuit of $M_{a,b}.$
\end{remark}

\section{Connectivity of Splitting Matroids}

Let $M$ be a matroid having ground set $E.$ The $1$-separation of the matroid $M$ is a partition $(S, T)$ of $E$ such that, $|S|, |T|\geq 1$ and $r(S) + r(T)- r(M) <1.$ We say $M$ is connected if $M$ has no $1$-separation. 

\noindent Observe that in Example \ref{ex1}, the matroid $M$ as well as the splitting matroid $M_{3,5}$ are connected. In general, the connectivity of a $p$-matroid is not closed under the splitting operation, which is observed in the following example.

\begin{example}\label{ex2}
	Consider the vector matroid $M \cong M[A]$ represented by the matrix $A$ over the field $GF(3)$.
	
	\begin{center}
		$\mathbf{A} = 
		\begin{pNiceMatrix}%
		[first-col,
		first-row,
		code-for-first-col = \color{black},
		code-for-first-row = \color{black}]
		& 1 & 2 & 3 & 4 & 5 & 6 & 7 & 8 \\
		& 1 & 0 & 0 & 0 & 0 & 1 & 1 & 2 \\
		& 0 & 1 & 0 & 0 & 1 & 0 & 1 & 1 \\
		& 0 & 0 & 1 & 0 & 1 & 1 & 0 & 1 \\
		& 0 & 0 & 0 & 1 & 2 & 1 & 1 & 0 \\
		& 0 & 1 & 1 & 0 & 0 & 0 & 0 & 0
		\end{pNiceMatrix} \qquad
		\mathbf{A_{1,4}} = 
		\begin{pNiceMatrix}%
		[first-col,
		first-row,
		code-for-first-col = \color{black},
		code-for-first-row = \color{black}]
		& 1 & 2 & 3 & 4 & 5 & 6 & 7 & 8 \\
		& 1 & 0 & 0 & 0 & 0 & 1 & 1 & 2 \\
		& 0 & 1 & 0 & 0 & 1 & 0 & 1 & 1 \\
		& 0 & 0 & 1 & 0 & 1 & 1 & 0 & 1 \\
		& 0 & 0 & 0 & 1 & 2 & 1 & 1 & 0 \\
		& 0 & 1 & 1 & 0 & 0 & 0 & 0 & 0 \\
		& 1 & 0 & 0 & 1 & 0 & 0 & 0 & 0	
	\end{pNiceMatrix}$
	\end{center}
	
	\noindent Here the matroid $M$ is connected. But the splitting matroid $M_{1,4} \cong M [A_{1,4}] $ is not connected. 
\end{example}

\begin{remark}
Let $M$ be a $p$-matroid on ground set $E$ and $\{a,b\}\subset E.$ If $M$ contains no $np$-circuit, then $M_{a,b}\cong M.$ In this case, $M$ is connected if and only if $M_{a,b}$ is connected.
\end{remark}

\noindent In the next result, a sufficient condition is provided for the matroid $M_{a,b}$ to be connected.
\begin{lemma}\label{con}
Let $M$ be a connected $p$-matroid and $\{a,b\}\subset E(M)$. If for every proper subset $S$ of $E(M)$ with $|S|\geq 1$, either $S$ or $T = E(M)\setminus S$ contains an $np$-circuit of $M,$ then $M_{a,b}$ is connected.
\end{lemma}

\begin{proof}
Note that, $M$ is a connected $p$-matroid,which implies that $M$ has no $1$-separation. On the contrary, assume $M_{a,b}$ is not connected. That is, $M_{a,b}$ has $1$-separation,say $(S,T).$ Therefore \[r'(S) + r'(T) -r'(M_{a,b}) < 1.\] If $S$ and $T$ both contains $np$-circuits then, by lemma \ref{s},  we have\[r(S)+1+r(T)+1-r(M)-1 = r(S)+r(T)-r(M) + 1 < 1.\] \noindent Thus we get a contradiction to the fact $r(S)+r(T)-r(M)\geq 0. $ Further, if only one of $S$ or $T,$ say $S,$ contains an $np$- circuit, then we have\[r'(S) + r'(T) -r'(M_{a,b}) = r(S)+1+r(T)-r(M)-1 < 1.\] That is  \[r(S)+r(T)-r(M) < 1.\] Thus $(S,T)$ gives a $1$-separation of $M$ which is not possible.
\end{proof}

\noindent For $n\geq 2,$ a matroid $M$ is said to be vertically $n$-connected if for any positive integer $k<n,$ there does not exist a partition $(S,T)$ of $E(M),$ such that $r(S)+r(T)-r(M)< k,$ and $r(S), r(T)\geq k.$ 

\begin{theorem}
Let $M$ be a connected, vertically $3$-connected, simple $p$-matroid and $\{a,b\}\subset E.$ Then $M_{a,b}$ is connected $p$-matroid if and only if for every $e\in E$ there is an $np$-circuit of $M$ not containing $e.$ 
\end{theorem}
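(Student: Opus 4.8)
The plan is to prove the biconditional by translating (non)connectivity of $M_{a,b}$ into the existence of a $1$-separation and then reading off all rank values through Corollary \ref{s}, which says that a subset keeps its $M$-rank in $M_{a,b}$ unless it contains an $np$-circuit, in which case its rank jumps by exactly one. Write (*) for the hypothesis that for every $e\in E$ there is an $np$-circuit of $M$ avoiding $e$. Observe that (*) already forces $M$ to contain at least one $np$-circuit, so by Corollary \ref{coc} we have $r'(M_{a,b})=r(M)+1$ in the relevant direction; I take this as a standing assumption, since if $M$ has no $np$-circuit then $M_{a,b}\cong M$ and the splitting is trivial. Note also that one cannot simply invoke Lemma \ref{con}: its hypothesis (that \emph{every} bipartition of $E$ has a full $np$-circuit on one side) is strictly stronger than (*), so the argument must start from an assumed separation and work with that particular partition.

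For the direction $M_{a,b}$ connected $\Rightarrow$ (*), I argue the contrapositive. Suppose some $e\in E$ lies in every $np$-circuit of $M$, and set $S=\{e\}$, $T=E\setminus\{e\}$. Then $T$ contains no $np$-circuit, so $r'(T)=r(T)$, while $r'(S)=r(S)=1$ by simplicity (a single element is neither a loop nor an $np$-circuit). Since $M$ is connected, $e$ is not a coloop, so $r(T)=r(M)$; combining with $r'(M_{a,b})=r(M)+1$ gives $r'(S)+r'(T)-r'(M_{a,b})=1+r(M)-(r(M)+1)=0<1$. Thus $(S,T)$ is a $1$-separation of $M_{a,b}$, so $M_{a,b}$ is disconnected, which is the contrapositive. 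This direction uses only simplicity and connectivity of $M$, not vertical $3$-connectivity.

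For the direction (*) $\Rightarrow$ $M_{a,b}$ connected, assume toward a contradiction that $M_{a,b}$ has a $1$-separation $(S,T)$, so $r'(S)+r'(T)-r'(M_{a,b})\le 0$, and substitute $r'(M_{a,b})=r(M)+1$. I split into three cases according to how $np$-circuits sit in $S$ and $T$, applying Corollary \ref{s} on each side. If both sides contain an $np$-circuit, the two rank increments yield $r(S)+r(T)-r(M)+1\le 0$, contradicting submodularity $r(S)+r(T)\ge r(S\cup T)=r(M)$. If exactly one side contains an $np$-circuit, the single increment cancels the $+1$ and gives $r(S)+r(T)-r(M)<1$, i.e.\ a $1$-separation of the connected matroid $M$, which is impossible. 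The remaining case, neither side containing an $np$-circuit, is the crux: here $r'=r$ on both sides, so $r(S)+r(T)-r(M)\le 1$, and connectivity of $M$ forces the equality $r(S)+r(T)-r(M)=1$.

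This neither-side case is where vertical $3$-connectivity enters and where I expect the real work. Since the separation value equals $1<2$, vertical $3$-connectivity (with $k=2$) rules out $r(S),r(T)\ge 2$ holding simultaneously, so some part, say $S$, has $r(S)\le 1$; as $M$ is simple and $S\neq\emptyset$, this upgrades to $r(S)=1$ and then $|S|=1$, say $S=\{x\}$. Consequently $T=E\setminus\{x\}$ contains no $np$-circuit, which means every $np$-circuit of $M$ contains $x$, directly contradicting (*) applied to $e=x$. Hence $M_{a,b}$ admits no $1$-separation and is connected. The main obstacle is precisely this last case: passing from the weak inequality $r(S)+r(T)-r(M)\le 1$ to an honest singleton separation requires combining simplicity (to turn rank $1$ into cardinality $1$) with vertical $3$-connectivity (to exclude both parts having rank at least $2$), and one should verify that the degenerate situations—an empty part, or $M$ with no $np$-circuit—are correctly excluded by the standing assumptions.
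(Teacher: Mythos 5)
Your proof is correct and follows essentially the same route as the paper: both directions reduce a putative $1$-separation to rank identities via Corollary \ref{s}, with simplicity and vertical $3$-connectivity disposing of the case where neither side contains an $np$-circuit, and the singleton $S=\{e\}$ argument giving necessity. The only difference is organizational — you case on the distribution of $np$-circuits first and deduce $|S|=1$ from vertical $3$-connectivity plus simplicity, while the paper cases on $|S|=1$ versus $|S|,|T|>1$ first — and your explicit flagging of the degenerate situation where $M$ has no $np$-circuit (which both arguments implicitly exclude when writing $r'(M_{a,b})=r(M)+1$) is a small improvement in care.
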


\begin{proof}
On the contrary, suppose that $M_{a,b}$ is not connected. And $(S,T)$ is a $1$-separation of $E(M_{a,b})$. Consequently, $|S|, |T|\geq 1$ and $r'(S)+r'(T)-r'(M_{a,b})\leq 0.$ Following are the possible cases:

\noindent $\mathbf{Case~I:}$ $|S|=1$

\noindent Suppose $S=\{x\}$ where $x\in E$. As  $M$ contains an $np$-circuit $C$ such that $\{x\}\cap C=\phi,$ which gives $C\subseteq T.$ By Corollary \ref{s}, $r'(T)=r(T)+1.$ We get $r(S)+r(T)+1-r(M)-1\leq 0.$ Consequently, $r(S)+r(T)-r(M)\leq 0$ and $|S|,|T|\geq 1,$ which forms a $1$-separation of $M$, a contradiction.

\noindent $\mathbf{Case~II:}$ $|S|,|T|>1$

\noindent If either $S$ or $T$ contains an $np$-circuit then using Lemma \ref{con} we conclude that $M_{a,b}$ is connected.

\noindent Suppose both $S$ and $T$ do not contain an $np$-circuit. Then $r'(S)+r'(T)-r'(M_{a,b})\leq 0.$ Using Corollary \ref{s} we get, $r(S)+r(T)-r(M)-1\leq 0.$ That is $r(S)+r(T)-r(M)\leq 1.$ Since $M$ is simple, $r(S)$, $r(T) \geq 2$. It implies $(S, T)$ is a vertical $2$-separation of $M,$ a contradiction.

\noindent If each of $S$ and $T$ contains an $np$-circuit, then $r'(S)+r'(T)-r'(M_{a,b})\leq 0.$ Again by Corollary \ref{s} we get $r(S)+1+r(T)+1-r(M)-1\leq 0.$ That is $r(S)+r(T)-r(M)\leq -1.$ Consequently,  $(S, T)$ is a $1$ separation of $M$, which is a contradiction. 

\noindent Therefore $M_{a,b}$ has no $1$-separation. We conclude that $M_{a,b}$ is a connected $p$-matroid.

\noindent To check the necessity of the condition, suppose that $M_{a,b}$ is connected. On the contrary, assume that there is an element $e \in E $ which is contained in every $np$-circuit of $M.$ Let $S=\{e\}$ and $T=E\setminus S,$ then $T$ contains no $np$-circuit of $M.$ Thus $r'(S)=1,$ $r'(T)=r(T)=r(M)$ and $|S|, |T|\geq 1.$ Further, $r'(S)+r'(T)-r'(M_{a,b})=1+r(T)-r(M)-1=0.$ Thus $(S,T)$ forms a $1$-separation of $M_{a,b},$ a contradiction. Therefore we conclude that for every $e\in E(M)$ there is an $np$-circuit of $M$ not containing $e.$
\end{proof}

\noindent  A matroid $M$ is connected if and only if, for every pair of distinct elements of $E$, there is a circuit containing both. This result is used to prove the following proposition.

\begin{proposition}\label{npuni}
Let $M$ be a non-connected $p$-matroid on ground set $E$ with exactly two connected components $M_1,$$M_2,$ and $\{a,b\}\subset E$. Let $a \in M_1, b \in M_2$. If $C$ and $C'$ are $np$-circuits of $M$ contained in $M_1$ and $M_2,$ respectively, then $C \cup C'$ is a circuit of $M_{a,b}.$
\end{proposition}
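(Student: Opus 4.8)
The plan is to first extract what the hypotheses force, then certify dependence through Lemma \ref{dep}, and finally establish minimality by a coupled-dependency argument on the augmented matrix. Since $a\in M_1$ and $b\in M_2$ lie in different components while $C\subseteq M_1$ and $C'\subseteq M_2$, the only way $C$ can be an $np$-circuit is that $C\cap\{a,b\}=\{a\}$, and likewise $C'\cap\{a,b\}=\{b\}$; in particular $a\in C$, $b\in C'$, and $C\cap C'=\phi$ because $E(M_1)\cap E(M_2)=\phi$. Lemma \ref{dep} then applies verbatim and shows $C\cup C'$ is a dependent set of $M_{a,b}$.

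It remains to prove that $C\cup C'$ is \emph{minimally} dependent. For this I would pass to a block-diagonal representation $A=\mathrm{diag}(A_1,A_2)$ of $M=M_1\oplus M_2$, so that $A_{a,b}$ is obtained by appending one new row carrying $\alpha$ in column $a$ (inside the $A_1$ block) and $\alpha$ in column $b$ (inside the $A_2$ block). A linear dependence among the columns of $C\cup C'$ in $A_{a,b}$ then splits into three conditions: an $M_1$-dependence among the columns of $C$ with coefficients $(x_u)$, an $M_2$-dependence among the columns of $C'$ with coefficients $(y_v)$, and the coupling equation $x_a+y_b=0$ coming from the new row. Because $C$ and $C'$ are circuits of $M$, each of the first two conditions has a one-dimensional solution space spanned by its canonical dependence, all of whose coefficients are nonzero; the coupling equation merely ties the two free scalars together and leaves a nontrivial, all-nonzero solution, reconfirming dependence and exhibiting its precise shape.

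For minimality, take any proper subset $D\subsetneq C\cup C'$; it omits some element, say of $C$ (the other case is symmetric). Then the columns of $C\cap D$ form a proper subset of the circuit $C$ and are independent in $M_1$, so $x_u=0$ for all $u\in C\cap D$, whence the $a$-contribution to the new row vanishes. The coupling equation now reads $y_b=0$, and since $C'$ is a circuit, any dependence of its columns with a zero coefficient on $b$ is trivial (and if $D$ omits an element of $C'$ as well, that part is independent outright). Hence all $y_v=0$, so $D$ is independent in $M_{a,b}$. As every proper subset of $C\cup C'$ is independent while $C\cup C'$ itself is dependent, $C\cup C'$ is a circuit of $M_{a,b}$.

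The step I expect to be the main obstacle is this minimality argument: one must argue carefully that deleting a single element from either side collapses the \emph{coupled} dependence—which runs only through the single new row—down to the trivial solution, using that the surviving columns in the affected component no longer support a circuit. An equivalent and perhaps cleaner route is to invoke Theorem \ref{circuits}: because $M$ is disconnected, no circuit of $M$ contains both $a$ and $b$, so every circuit of $M$ inside $C\cup C'$ lies in one component and equals $C$ or $C'$. This rules out any member of $\mathcal{C}_0$ (a $p$-circuit would need both $a$ and $b$) and of $\mathcal{C}_1$ (a $p$-dependent set would require all-nonzero coefficients on an independent cross-component part, which is impossible), and it forces $C=C_1$, $C'=C_2$, so that $C\cup C'$ is exactly the minimal element $C_1\cup C_2$ of $\mathcal{C}_2$.
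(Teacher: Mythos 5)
Your proof is correct, but it takes a genuinely different route from the paper's. The paper also begins with Lemma \ref{dep} to get dependence, but then argues minimality by contradiction inside $M_{a,b}$: it takes a circuit $C_1\subseteq C\cup C'$ of $M_{a,b}$, and if $C_1$ is proper it splits into two cases --- either $C_1\cap\{a,b\}=\phi$, so $C_1$ would be a circuit of $M$ meeting both components, or $C_1$ meets $\{a,b\}$, in which case $a,b\in C_1$ because $\{a,b\}$ is a cocircuit of $M_{a,b}$ and a circuit of $M$ inside $C_1$ would again have to straddle the two components; both contradict disconnectedness. You instead work directly with the representation: you split any dependence of the columns of $C\cup C'$ in $A_{a,b}$ into the two block dependencies plus the single coupling equation from the new row, and show that removing any element collapses the coupled solution to zero. (Strictly you should note that $A$ can be assumed block-diagonal after row operations, or equivalently that $\langle C\rangle\cap\langle C'\rangle=0$ since the components have additive rank; that one line makes the splitting of the dependence rigorous.) Your computational argument is a bit longer but arguably tighter: the paper's step ``by similar argument, $C_2\nsubseteq C$'' glosses over the possibility $C_1=C\cup D$ with $D\subsetneq C'$ independent, where the only circuit of $M$ inside $C_1$ \emph{is} $C$; your kernel analysis (and equally your alternative route via Theorem \ref{circuits}, which rules out members of $\mathcal{C}_1$ because an independent cross-component part cannot carry all-nonzero coefficients) disposes of exactly that configuration explicitly. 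The paper's approach buys brevity and stays at the matroid level (circuit--cocircuit orthogonality); yours buys transparency about where the coupling between the two components actually lives, namely in the one appended row.
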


\begin{proof}
By Lemma \ref{dep}, $C \cup C'$ is a dependent set of $M_{a,b}.$ Therefore there is a circuit $C_1$ of $M_{a,b}$ such that $C_1 \subseteq C \cup C'.$ Note that $C_1$ is a dependent set of $M$. 

\noindent Suppose $ C_1 \subset C \cup C'.$ Then $C_1 \nsubseteq C$ and $C_1 \nsubseteq C'$ because $C$ and $C'$ are minimal dependent sets of $M$. Let $C_1 \cap C \neq \phi$ and $C_1 \cap C' \neq \phi$. Then we have the following two cases.

\noindent $\mathbf{Case~I:}$ Let $C_1 \cap \{a,b\} = \phi.$ Then $C_1$ is a $p$-circuit containing elements of $M_1$ and $M_2$ which imply that $M$ is a connected matroid, a contradiction. 

\noindent $\mathbf{Case~II:}$ Let $C_1 \cap \{a,b\} \neq \phi.$ Then $a,b \in C_1$ because $\{a,b\}$ is a co-circuit of $M_{a,b}.$ Since $M$ is not connected, $C_1$ can not be a circuit of $M.$ Thus there is a circuit, say $C_2$, of $M$ such that $C_2 \subset C_1.$ Now by similar argument as above, $C_2 \nsubseteq C$ and $C_2 \nsubseteq C'$.  Consequently, $C_2 \cap C \neq \phi$ and  $C_2 \cap C' \neq \phi$. Thus $C_2$ is a circuit of $M$ containing elements of $M_1$ and $M_2.$ Therefore $M$ is connected which is a contradiction. 

\noindent Hence, in either case, we get a contradiction. Thus $C_1 = C \cup C'.$ 

\end{proof}

\begin{corollary}
Let $M$ be a non-connected $p$-matroid on ground set $E$ with exactly two connected components $M_1,$$M_2$ and $\{a,b\}\subset E$. Let $a \in M_1,$$ b \in M_2$. Then $M_{a,b}$ is a connected $p$-matroid. 
\end{corollary}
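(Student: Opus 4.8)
The plan is to invoke the circuit characterisation of connectivity recalled immediately before the statement: $M_{a,b}$ is connected if and only if every pair of distinct elements of $E$ lies in a common circuit of $M_{a,b}$. I would fix two distinct elements $x,y\in E$ and manufacture such a circuit by a short case analysis according to which of the components $M_1,M_2$ contains $x$ and $y$, using Proposition \ref{npuni} to stitch the two components together.

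First I would record the structural fact that powers the argument. Because $M$ is simple and coloopless, no component can consist of a single element (it would be a loop or a coloop), so each of $M_1,M_2$ is a connected matroid on at least two elements. Applying the circuit characterisation inside each component, any two elements of $M_i$ lie in a common circuit of $M_i$. In particular every element of $M_1$ shares a circuit of $M_1$ with $a$; such a circuit lies wholly in $M_1$ and so meets $\{a,b\}$ in the single element $a$, hence is an $np$-circuit of $M$. Symmetrically, every element of $M_2$ lies in an $np$-circuit of $M_2$ through $b$.

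Then I would split into cases. If $x,y\in M_1$, connectivity of $M_1$ gives a circuit $C$ of $M_1$ containing both. If $a\notin C$ then $C$ is disjoint from $\{a,b\}$, so $C\in\mathcal{C}_0\subseteq\mathcal{C}(M_{a,b})$ and we are done. If $a\in C$, then $C$ is an $np$-circuit of $M_1$; picking any $np$-circuit $C'$ of $M_2$ through $b$, Proposition \ref{npuni} shows $C\cup C'$ is a circuit of $M_{a,b}$, and it still contains $x$ and $y$. The case $x,y\in M_2$ is symmetric. Finally, if $x\in M_1$ and $y\in M_2$, I would choose an $np$-circuit $C$ of $M_1$ through $x$ and $a$ and an $np$-circuit $C'$ of $M_2$ through $y$ and $b$ (both exist by the structural fact), and again apply Proposition \ref{npuni} to get that $C\cup C'$ is a circuit of $M_{a,b}$ containing $x$ and $y$.

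Since in every case the chosen pair lies in a common circuit of $M_{a,b}$, the matroid is connected, and it is a $p$-matroid by construction. The step I expect to require the most care is the sub-case $x,y\in M_1$ with $a\in C$: here the circuit $C$ supplied by connectivity acquires a nonzero entry in the appended row of $A_{a,b}$ and therefore becomes independent in $M_{a,b}$, which is precisely why one cannot use $C$ directly and must bridge to $M_2$ via a second $np$-circuit. I would also verify at the outset that the circuits through $a$ produced inside $M_1$ genuinely qualify as $np$-circuits, so that the hypotheses of Proposition \ref{npuni} are met.
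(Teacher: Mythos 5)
Your proposal is correct and follows essentially the same route as the paper: reduce connectivity to the ``every pair of elements lies in a common circuit'' criterion, split into the cases $x,y\in M_1$ (or $M_2$) and $x\in M_1,\ y\in M_2$, and use Proposition \ref{npuni} to glue an $np$-circuit through $a$ in $M_1$ to an $np$-circuit through $b$ in $M_2$ whenever the circuit supplied by componentwise connectivity meets $\{a,b\}$. Your added checks (that each component has at least two elements and that the circuits through $a$ or $b$ inside a single component really are $np$-circuits) are sound and, if anything, slightly more careful than the paper's own wording of Case~I.
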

\begin{proof}
It is enough to show that for every pair $x,y \in E,$ there is a circuit of $M_{a,b}$ containing $x$ and $y$.
	
\noindent $\mathbf{Case~I:}$ Let $x,y \in M_1.$ Then there is a circuit, say $C,$ of $M$ containing $x,y.$ If $C$ is a $p$-circuit, then it is the desired circuit of $M_{a,b}.$ If $C$ is an $np$-circuit of $M$, then $a \in C.$ Let $C'$ be an $np$-circuit of $M$ contained in $M_2.$ Then by Proposition \ref{npuni}, $C \cup C'$ is a circuit of $M_{a,b}$ containing $x$ and $y.$	

\noindent $\mathbf{Case~II:}$ Let $x \in M_1$ and $y\in M_2$. As $M_1,$ $M_2$ are connected there are circuits, say $C_1,$ $C_2$, containing $x,a$ and $y,b$, respectively, in $M$. Note that $C_1$, $C_2$ are $np$-circuits. By Proposition \ref{npuni}, $C_1\cup C_2$ is a circuit of $M_{a,b}$ containing $x$ and $y.$
	
\end{proof}

\section{Applications}

Eulerian $p$-matroids are not closed under the splitting operation as observed in the following example.
\begin{example}
	 $M=M[A]$ is Eulerian $p$-matroid over $GF(3)$ but the splitting matroid $M_{1,2}=M[A_{1,2}]$ is not Eulerian $p$-matroid.
	\begin{center}
		$\mathbf{A} = 
		\begin{pNiceMatrix}%
			[first-col,
			first-row,
			code-for-first-col = \color{black},
			code-for-first-row = \color{black}]
			& 1 & 2 & 3 & 4 & 5 & 6 & 7 & 8 \\
			& 1 & 0 & 0 & 0 & 0 & 1 & 1 & 2 \\
			& 0 & 1 & 0 & 0 & 1 & 0 & 1 & 1 \\
			& 0 & 0 & 1 & 0 & 1 & 1 & 0 & 1 \\
			& 0 & 0 & 0 & 1 & 2 & 1 & 1 & 0
			
		\end{pNiceMatrix} \qquad
		\mathbf{A_{1,2}} = 
		\begin{pNiceMatrix}%
			[first-col,
			first-row,
			code-for-first-col = \color{black},
			code-for-first-row = \color{black}]
		& 1 & 2 & 3 & 4 & 5 & 6 & 7 & 8 \\
		& 1 & 0 & 0 & 0 & 0 & 1 & 1 & 2 \\
		& 0 & 1 & 0 & 0 & 1 & 0 & 1 & 1 \\
		& 0 & 0 & 1 & 0 & 1 & 1 & 0 & 1 \\
		& 0 & 0 & 0 & 1 & 2 & 1 & 1 & 0	\\
		& 1 & 1 & 0 & 0 & 0 & 0 & 0 & 0
		
		\end{pNiceMatrix}$
	\end{center}

\end{example}

%

\noindent Let $M$ be Eulerian matroid on ground set $E.$ Then there are disjoint circuits $C_1,$$C_2,$ $\ldots$,$C_k$ of $M$ such that
$E= C_1 \cup C_2 \cup ...\cup C_k.$
\noindent Let $\{a,b\} \subset E$ and $M_{a,b}$ be the splitting matroid. We say that the collection \~{C} $=\{C_1,C_2,...,C_k\}$ is a \textit{$p$-decomposition}  of $M$ if $a,b \in C_i,$ for some $i \in \{1,2,\ldots,k\},$ and $C_i$ is a $p$-circuit or $a \in C_i, b \in C_j$ for some $i \neq j$ in $\{1,2,\ldots,k\}$ and $(C_i \cup C_j) \in \mathcal{C}_2$. 


\noindent In the following result we give a sufficient condition to yield Eulerian $p$-matroids from Eulerian $p$-matroids after the splitting operation.

\begin{proposition}\label{e1}
Let $M$ be a $p$-matroid on ground set $E$ and $\{a,b\} \subset E$. If $M$ is Eulerian matroid having a $p$-decomposition, then $M_{a,b}$ is Eulerian. 
\end{proposition}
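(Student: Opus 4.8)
The plan is to use the given $p$-decomposition of $M$ directly as the skeleton of an Eulerian decomposition of $M_{a,b}$, modifying it in at most one place. By hypothesis there is a collection $\tilde{C} = \{C_1, C_2, \ldots, C_k\}$ of pairwise disjoint circuits of $M$ with $E = C_1 \cup C_2 \cup \cdots \cup C_k$, satisfying one of the two structural conditions in the definition of a $p$-decomposition. Since the $C_i$ are pairwise disjoint, the element $a$ lies in exactly one of them and $b$ lies in exactly one of them.

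First I would dispose of all the circuits that avoid $\{a,b\}$. If a member $C_m$ of the decomposition contains neither $a$ nor $b$, then $C_m \cap \{a,b\} = \phi$, so $C_m \in \mathcal{C}_0$ and hence, by Theorem \ref{circuits}, $C_m$ is already a circuit of $M_{a,b}$; thus every such circuit can be kept unchanged. Next I would treat the two cases of the definition separately. In the first case, $a$ and $b$ lie in the same member $C_i$, which is a $p$-circuit; then $C_i \in \mathcal{C}_0$, so $C_i$ is a circuit of $M_{a,b}$, while every other member avoids $\{a,b\}$ and is handled as above. Hence the unchanged collection $\tilde{C}$ is itself a partition of $E$ into circuits of $M_{a,b}$.

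In the second case, $a \in C_i$ and $b \in C_j$ with $i \neq j$ and $C_i \cup C_j \in \mathcal{C}_2$; by Theorem \ref{circuits}, $C_i \cup C_j$ is a single circuit of $M_{a,b}$. I would then replace the pair $\{C_i, C_j\}$ by the single set $C_i \cup C_j$, leaving all other members untouched, to obtain the collection $\{C_i \cup C_j\} \cup \{C_m : m \neq i, j\}$. Every member of this collection is a circuit of $M_{a,b}$ by the previous paragraph together with $C_i \cup C_j \in \mathcal{C}_2$.

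The only point that requires verification—and it is purely a bookkeeping matter—is that the modified collection is again an Eulerian decomposition: its members are pairwise disjoint because merging two blocks of a partition preserves disjointness from the remaining blocks, and their union is still $E$. This produces a partition of $E$ into disjoint circuits of $M_{a,b}$, so $M_{a,b}$ is Eulerian. I expect no genuine obstacle here; the entire weight of the argument rests on the circuit characterization of Theorem \ref{circuits}, which converts each structural clause of the $p$-decomposition precisely into the statement that the relevant set is a circuit of the split matroid.
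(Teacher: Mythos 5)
Your proposal is correct and follows essentially the same route as the paper's own proof: split into the two cases of the $p$-decomposition definition, keep the circuits avoiding $\{a,b\}$ (which lie in $\mathcal{C}_0$), and in the second case merge $C_i$ and $C_j$ into the single circuit $C_i\cup C_j\in\mathcal{C}_2$ of $M_{a,b}$. Your version is slightly more explicit than the paper's in justifying, via Theorem \ref{circuits}, why each untouched member remains a circuit of $M_{a,b}$, but the argument is the same.
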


\begin{proof}
Let \~{C} $=\{C_1,C_2,\ldots,C_k\}$ be a $p$-decomposition of $M.$ Then \[E= C_1 \cup C_2 \cup \ldots\cup C_k. \]

\noindent $\mathbf{Case~I:}$ If  $a,b \in C_i$ some $i \in \{1,2,\ldots,k\} $ and $C_i$ is a $p$-circuit. Then the same collection \~{C} is a disjoint circuit decomposition of $E(M_{a,b})$. Therefore $M_{a,b}$ is Eulerian matroid.

\noindent $\mathbf{Case~II:}$ Assume, without loss of generality, $a \in C_1,$$ b \in C_2$. By hypothesis, \~{C} is a \textit{$p$-decomposition}, therefore $(C_1 \cup C_2)$ is a circuit of $M_{a,b.}$  Denote $(C_1 \cup C_2)$ by $C.$ Then the collection $\{C,C_3,\ldots,C_k\}$ is a disjoint circuit decomposition of $M_{a,b}.$ Hence $M_{a,b}$ is Eulerian matroid.

\end{proof}

\begin{proposition}\label{e2}
If $M_{a,b}$ is Eulerian $p$-matroid having a disjoint circuit decomposition which contains no member of $\mathcal{C}_1,$ then $M$ is Eulerian.
\end{proposition}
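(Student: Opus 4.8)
The plan is to prove the contrapositive-style converse of Proposition~\ref{e1}: starting from a disjoint circuit decomposition of $M_{a,b}$ that avoids $\mathcal{C}_1$, I would construct a disjoint circuit decomposition of $M$, thereby showing $M$ is Eulerian. First I would let $\widetilde{D}=\{D_1,D_2,\ldots,D_m\}$ be the given disjoint circuit decomposition of $M_{a,b}$, so that $E=D_1\cup D_2\cup\cdots\cup D_m$ with the $D_i$ pairwise disjoint circuits of $M_{a,b}$, and by hypothesis each $D_i\in\mathcal{C}_0\cup\mathcal{C}_2$. The goal is to replace each $D_i$ by one or more disjoint circuits of $M$ whose union is $D_i$, since the $D_i$ are already pairwise disjoint and cover $E$, piecing these local decompositions together yields a disjoint circuit decomposition of $M$.

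The core of the argument is therefore a case analysis on the type of each $D_i$, using Theorem~\ref{circuits}. If $D_i\in\mathcal{C}_0$, then $D_i$ is either a $p$-circuit of $M$ or a circuit of $M$ disjoint from $\{a,b\}$; in both cases $D_i$ is itself a single circuit of $M$, so it contributes itself unchanged. If $D_i\in\mathcal{C}_2$, then by the definition of $\mathcal{C}_2$ we have $D_i=C_1\cup C_2$ where $C_1,C_2\in\mathcal{C}(M)$ are disjoint circuits of $M$ with $a\in C_1$, $b\in C_2$; here $D_i$ is replaced by the two disjoint circuits $C_1$ and $C_2$ of $M$. Replacing each $D_i$ by its corresponding member(s) of $\mathcal{C}(M)$ in this way produces a family of circuits of $M$ that is still pairwise disjoint (the new pieces live inside the old, mutually disjoint $D_i$) and whose union is still all of $E$. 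This exhibits $E$ as a disjoint union of circuits of $M$, which is exactly the statement that $M$ is Eulerian.

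The delicate point I would be most careful about is the role of the hypothesis that the decomposition contains no member of $\mathcal{C}_1$. Members of $\mathcal{C}_1$ have the form $C\cup I$ where $C$ is an $np$-circuit and $I\neq\phi$ is an independent set of $M$ that is not itself coverable by circuits of $M$; such a set need not decompose into circuits of $M$, so it could obstruct building a circuit decomposition of $M$. By excluding $\mathcal{C}_1$ we guarantee every block $D_i$ falls into one of the two benign cases above, where a clean reinterpretation as circuits of $M$ is available. I would state this explicitly as the reason the argument goes through and note that without it the conclusion can fail. The remaining steps, verifying pairwise disjointness and that the union is preserved under the substitution, are routine set-theoretic bookkeeping once the case analysis is in place.
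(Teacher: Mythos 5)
Your proposal is correct and follows essentially the same route as the paper: by Theorem~\ref{circuits} every block of the given decomposition lies in $\mathcal{C}_0\cup\mathcal{C}_2$, blocks in $\mathcal{C}_0$ are circuits of $M$ and blocks in $\mathcal{C}_2$ split into two disjoint circuits of $M$, yielding a disjoint circuit decomposition of $E(M)$. Your explicit remark on why excluding $\mathcal{C}_1$ is needed is a welcome addition, but the argument itself matches the paper's.
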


\begin{proof}
Let $E(M_{a,b})=  C_1 \cup C_2 \cup \ldots \cup C_k$ be a disjoint circuit decomposition which contains no member of $\mathcal{C}_1.$ If $C_i \in \mathcal{C}_0,$  $C_j \in \mathcal{C}_2$ for some $i,j \in \{1,2, \ldots, k\},$ then $C_i \in \mathcal{C}(M)$ and $C_j = C_j^1 \cup C_j^2$ where $C_j^1, C_j^2 \in \mathcal{C}(M)$ and $C_j^1 \cap C_j^2 = \phi. $ Therefore \[E(M)=  C_1 \cup C_2 \cup ...\cup C_i \cup \ldots\cup C_j^1 \cup C_j^2 \cup \ldots\cup C_l\] is a disjoint circuit decomposition of $E(M).$ Therefore $M$ is Eulerian.
\end{proof}

\noindent Applying Proposition \ref{e1} and \ref{e2} for $p=2,$ we obtain the following result of Raghunthan et al. \cite{rsw} for binary matroids.

\begin{corollary}
Let $M$ be a binary matroid and $a,b \in E.$ Then $M$ is Eulerian if and only if $M_{a,b}$ is Eulerian.
\end{corollary}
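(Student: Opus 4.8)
The plan is to specialize the circuit description of Theorem \ref{circuits} to the field $GF(2)$ and then feed the two directions into Propositions \ref{e1} and \ref{e2}. The crucial simplification is that over $GF(2)$ every nonzero scalar equals $1$, so for any circuit $C$ with $\{a,b\}\subseteq C$ the coefficients of $a$ and $b$ in the defining dependency are both $1$, whence $coeff.(a)+coeff.(b)=1+1\equiv 0\ (mod\ 2)$. Thus every circuit of $M$ meeting $\{a,b\}$ in two elements is automatically a $p$-circuit, and every $np$-circuit meets $\{a,b\}$ in exactly one element. I would record this first, since it controls all three families $\mathcal{C}_0,\mathcal{C}_1,\mathcal{C}_2$.

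The key structural step is to show that $\mathcal{C}_1=\emptyset$ when $p=2$. Suppose $C\cup I$ were a $p$-dependent set as in the definition of $\mathcal{C}_1$, with $C$ an $np$-circuit, $I$ independent, $\{a,b\}\subseteq C\cup I$, and no member of $\mathcal{C}_0$ inside it. Over $GF(2)$ the support of any dependency is a disjoint union (symmetric difference) of circuits of $M$, so the witnessing $p$-dependency has support equal to a disjoint union of circuits containing both $a$ and $b$. Either a single one of these circuits contains both $a$ and $b$, in which case it is a $p$-circuit and hence a member of $\mathcal{C}_0$ lying inside $C\cup I$; or $a$ and $b$ lie in two distinct disjoint circuits of the union, which are then two disjoint $np$-circuits inside $C\cup I$. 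Both possibilities are excluded by the definition of $\mathcal{C}_1$, so no such $C\cup I$ exists. By Theorem \ref{circuits} this yields $\mathcal{C}(M_{a,b})=\mathcal{C}_0\cup\mathcal{C}_2$.

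For the forward implication I would start from a disjoint circuit decomposition $E=C_1\cup\cdots\cup C_k$ of the Eulerian matroid $M$ and manufacture a $p$-decomposition so that Proposition \ref{e1} applies. If $a$ and $b$ lie in a common $C_i$, then $C_i$ is a $p$-circuit and the first clause of a $p$-decomposition holds. If instead $a\in C_i$ and $b\in C_j$ with $i\neq j$, then $C_i,C_j$ are disjoint $np$-circuits and, by Lemma \ref{dep}, $C_i\cup C_j$ is dependent in $M_{a,b}$. Using $\mathcal{C}_1=\emptyset$, the only obstruction to $C_i\cup C_j$ being a $\mathcal{C}_2$-circuit is an internal member of $\mathcal{C}_0$; if none exists, the second clause of a $p$-decomposition holds. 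If a $p$-circuit $D\subseteq C_i\cup C_j$ does exist, then $D\in\mathcal{C}_0$ contains both $a$ and $b$, and the symmetric difference $(C_i\cup C_j)\setminus D$ is a disjoint union of circuits avoiding $\{a,b\}$, all lying in $\mathcal{C}_0$; replacing $C_i,C_j$ by $D$ together with this decomposition relocates $a,b$ into the single $p$-circuit $D$, returning us to the first clause. In every case Proposition \ref{e1} gives that $M_{a,b}$ is Eulerian.

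The backward implication is immediate from $\mathcal{C}_1=\emptyset$: any disjoint circuit decomposition of the Eulerian matroid $M_{a,b}$ uses only circuits of $\mathcal{C}_0\cup\mathcal{C}_2$, hence contains no member of $\mathcal{C}_1$, and Proposition \ref{e2} then delivers that $M$ is Eulerian. I expect the only genuine obstacle to lie in the forward direction, namely verifying that the separated case produces a legitimate member of $\mathcal{C}_2$ (the minimality built into its definition); the emptiness of $\mathcal{C}_1$ together with the re-decomposition trick above is exactly what removes this obstruction.
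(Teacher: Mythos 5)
Your proposal is correct and follows the same route as the paper, which disposes of this corollary in a single sentence by invoking Propositions \ref{e1} and \ref{e2} with $p=2$; what you have done is supply the two verifications the paper leaves implicit, namely that over $GF(2)$ every circuit containing both $a$ and $b$ is a $p$-circuit (so $\mathcal{C}_1=\emptyset$ and every disjoint circuit decomposition of $M_{a,b}$ trivially avoids $\mathcal{C}_1$, making Proposition \ref{e2} applicable), and that every Eulerian binary matroid admits a $p$-decomposition so that Proposition \ref{e1} applies. Both of these are right, and your use of the fact that a $GF(2)$-dependency with full support is a disjoint union of circuits is exactly the correct mechanism.

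One sub-case in the forward direction is glossed over: when $a\in C_i$, $b\in C_j$, the obstructing member of $\mathcal{C}_0$ inside $C_i\cup C_j$ need not a priori be a $p$-circuit --- it could be a circuit $D$ of $M$ disjoint from $\{a,b\}$, and your re-decomposition step is only stated for the $p$-circuit case. This is harmless but should be argued: if $C_i\cup C_j$ contains any circuit other than $C_i$ and $C_j$, then $a$ and $b$ lie in the same connected component of $M|(C_i\cup C_j)$ (otherwise that restriction is the direct sum of the two circuit matroids $M|C_i$ and $M|C_j$, whose only circuits are $C_i$ and $C_j$), hence some circuit inside $C_i\cup C_j$ contains both $a$ and $b$ and is therefore a $p$-circuit, and your symmetric-difference re-decomposition then goes through. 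With that observation inserted, your case analysis is complete and the proof is sound.
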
 

\noindent Note that, the splitting operation on non-Eulerian $p$-matroids may yield Eulerian $p$-matroids.

\begin{example} \label{counter}
Consider the vector matroid $M \cong M[A]$ represented by the matrix $A$ over field $GF(5)$.
	
\begin{center}
	$\mathbf{A} = 
\begin{pNiceMatrix}%
	[first-col,
	first-row,
	code-for-first-col = \color{black},
	code-for-first-row = \color{black}]
	& 1 & 2 & 3 & 4 & 5 \\
	&  1 & 0 & 0 & 1 & 1\\
	&  0 & 1 & 0 & 1 & 1 \\
	&  0 & 0 & 1 & 1 & 0 \\
	 
\end{pNiceMatrix} \qquad
\mathbf{A_{3,5}} = 
\begin{pNiceMatrix}%
	[first-col,
	first-row,
	code-for-first-col = \color{black},
	code-for-first-row = \color{black}]
	& 1 & 2 & 3 & 4 & 5 \\
	&  1 & 0 & 0 & 1 & 1\\
	&  0 & 1 & 0 & 1 & 1 \\
	&  0 & 0 & 1 & 1 & 0 \\
	&  0 & 0 & 1 & 0 & 1 \\
	
\end{pNiceMatrix}$
\end{center}

\noindent The circuits of $M$ are $\{1,2,3,4\},\{1,2,5\},\{3,4,5\}.$ Note that $M$ is non-Eulerian matroid. For $a=3$, $b=5$ and $\alpha=1,$ $A_{3,5}$ represents the splitting matroid $M_{3,5}.$ Here the matroid $M_{3,5}$ has only one circuit $\{1,2,3,4,5\}$. Therefore it is Eulerian matroid over $GF(5).$
\end{example}

%
\noindent In the following result, we provide a sufficient condition for the splitting operation on non-Eulerian $p$-matroids to yield Eulerian $p$-matroids.

\begin{proposition}
Let $M$ be non-Eulerian $p$-matroid on ground set $E.$ Then $E$ can be written as follows: \[E= C_1 \cup C_2 \cup \ldots C_k \cup I\] where $C_i \in \mathcal{C}(M),$$ I \in \mathcal{I}(M), $$C_i \cap C_j = \phi$ when $i \neq j \in \{1,2, \ldots, k\} $ and $C_i \cap I = \phi$ for all $i \in \{1,2, \ldots, k\}.$ Let $\{a,b\} \subset E.$ If there is a circuit $C_i$ such that $C_i \cup I \in \mathcal{C}_1,$ then $M_{a,b}$ is Eulerian Matroid.
\end{proposition}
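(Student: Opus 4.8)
The plan is to exhibit a disjoint circuit decomposition of $E(M_{a,b})$, since by definition that is exactly what it means for $M_{a,b}$ to be Eulerian. Because the splitting operation only appends a row and leaves the column labels untouched, the ground set is unchanged: $E(M_{a,b}) = E$. I would start from the given representation $E = C_1 \cup C_2 \cup \ldots \cup C_k \cup I$ together with the hypothesis that some circuit $C_i$ in this list satisfies $C_i \cup I \in \mathcal{C}_1$.

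First I would pin down where $a$ and $b$ sit. By the definition of $\mathcal{C}_1$ (equivalently, of a $p$-dependent set), any member $C_i \cup I$ of $\mathcal{C}_1$ must satisfy $\{a,b\} \subseteq C_i \cup I$. Since the decomposition of $E$ is into pairwise disjoint pieces, this immediately forces $C_j \cap \{a,b\} = \phi$ for every $j \neq i$. This is the one genuinely structural observation in the argument, and I expect it to be the only place requiring care; everything afterwards is bookkeeping.

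Next I would assemble the circuit decomposition of $M_{a,b}$. By the lemma stating that a circuit of $M$ disjoint from $\{a,b\}$ remains a circuit of $M_{a,b}$, each $C_j$ with $j \neq i$ is a circuit of $M_{a,b}$, since we have just shown $C_j \cap \{a,b\} = \phi$. For the distinguished index $i$, the hypothesis gives $C_i \cup I \in \mathcal{C}_1 \subseteq \mathcal{C}(M_{a,b})$ directly, via Theorem \ref{circuits}. Hence the collection $\{\, C_i \cup I \,\} \cup \{\, C_j : j \neq i \,\}$ consists entirely of circuits of $M_{a,b}$.

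Finally I would check that this collection is a bona fide disjoint decomposition of $E$. The pieces are pairwise disjoint: $C_i \cup I$ meets no $C_j$ with $j \neq i$ because both $C_i$ and $I$ are disjoint from each such $C_j$ in the original decomposition, while the $C_j$ among themselves are mutually disjoint by hypothesis. Their union is $(C_i \cup I) \cup \bigcup_{j \neq i} C_j = C_1 \cup \ldots \cup C_k \cup I = E = E(M_{a,b})$. Thus $E(M_{a,b})$ is partitioned into circuits of $M_{a,b}$, and therefore $M_{a,b}$ is an Eulerian $p$-matroid.
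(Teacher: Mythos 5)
The paper states this proposition without any proof at all, so there is nothing to compare your argument against; judged on its own, your proof is correct and is almost certainly the intended one. The one structural observation you flag as needing care --- that $\{a,b\}\subseteq C_i\cup I$ --- does indeed follow immediately from the definition of a $p$-dependent set (every member of $\mathcal{C}_1$ is a minimal $p$-dependent set $C\cup I$ with $\{a,b\}\subset C\cup I$ by construction), and from there the disjointness of the decomposition forces $C_j\cap\{a,b\}=\phi$ for $j\neq i$, so each such $C_j$ survives as a circuit of $M_{a,b}$ by the paper's first lemma on circuits, while $C_i\cup I$ is a circuit of $M_{a,b}$ by Theorem \ref{circuits}. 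The resulting family partitions $E(M_{a,b})=E$, which is exactly the paper's working definition of Eulerian. Two cosmetic remarks: the proposition also asserts, as a preliminary claim, that the decomposition $E=C_1\cup\cdots\cup C_k\cup I$ exists; you take it as given, which is reasonable since it is part of the hypothesis as phrased, but a one-line justification (take a maximal family of pairwise disjoint circuits; the complement can contain no circuit by maximality, hence is independent) would make the write-up self-contained. Also, your appeal to $\mathcal{C}_1\subseteq\mathcal{C}(M_{a,b})$ silently uses the forward inclusion of Theorem \ref{circuits}, which is the easy direction; that is fine, just worth citing explicitly.
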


\noindent An Eulerian $p$-matroid $M$ on $E$ can be transformed by repeated applications of splitting operations into a matroid in which $E$ is a circuit, which is observed in the following example.
\begin{example}
The matrix $A$ gives $GF(3)$ representation of the Eulerian matroid $S(5,6,12).$ Let $M$ denote the vector matroid of $A.$ Consider the following sequence of the splitting operations: $M^1=M_{1,6}$, $M^{2}=M^1_{1,4},$ $M^{3}= M^{2}_{1,3},$ $M^{4}= M^{3}_{3,9},$ and $M^{5}= M^{4}_{3,11}.$

$$	A = \kbordermatrix{
	& 1 & 2 & 3 & 4 & 5 & 6 & 7 & 8 & 9 & 10 & 11 & 12 \\
	&1 & 0 & 0 & 0  & 0 & 0 & 0 & 1 & 1 & 1 & 1 & 1\\
	&0 & 1 & 0 & 0  & 0 & 0 & 1 & 0 & 1 & 2 & 2 & 1\\
	&0 & 0 & 1 & 0  & 0 & 0 & 1 & 1 & 0 & 1 & 2 & 2\\
	&0 & 0 & 0 & 1  & 0 & 0 & 1 & 2 & 1 & 0 & 1 & 2\\
	&0 & 0 & 0 & 0  & 1 & 0 & 1 & 2 & 2 & 1 & 0 & 1\\
	&0 & 0 & 0 & 0  & 0 & 1 & 1 & 1 & 2 & 2 & 1 & 0 
} $$

The matroid $M^5$ is Eulerian in which $E(M)$ forms a circuit. We observe that each matroid $M$ and $M^k, k \in \{1,2,3,4\},$  have a $p$-\textit{decomposition}. Therefore by Proposition \ref{e1} the matroids $M^k, k \in \{1,2,\ldots,5\}$ are Eulerian.
\end{example}


\noindent In the light of Example \ref{counter}, the Theorem \ref{the1} by Raghunathan et al. \cite{rsw} does not hold, in general, for $p$-matroids, $p> 2.$

\noindent For $p>2,$ it remains open to prove or disprove the existence of a sequence of splitting operations on Eulerian $p$-matroid $M$ which transforms the gound set $E(M)$ into a circuit. The following algorithm transforms $E(M)$ into a circuit, if a sequence of splitting operations, described earlier,exists; otherwise, $E(M)$ can not be transformed into a circuit.  \\

\noindent \textbf{Algorithm:} \\ Let $M$ be Eulerian $p$-matroid, 
\begin{description}
	\item $\mathbf{Step~1:}$ Find all the circuits of $M.$
	\item $\mathbf{Step~2:}$ List all the circuit decompositions of $M$ as $\{\mathcal{D}_1, \mathcal{D}_2, \ldots, \mathcal{D}_k\}.$
	\item $\mathbf{Step~3:}$If there is a pair $a,b \in E(M)$ such that $\mathcal{D}_i$ is a $p$-decomposition of $M,$ and $|\mathcal{D}_i|>1,$ for some $i \in [k],$ then the splitting matroid $M_{a,b}$ is Eulerian by Proposition \ref{e1}. Replace $M$ by $M_{a,b}$ and go to $\mathbf{Step~1.}$ \\~\\If no such pair $a,b \in E(M)$ exists, then go to \textbf{Step~4}.
	\item $\mathbf{Step~4:}$ Then $E(M)$ is the circuit of $M$ or $E(M)$ can not be transformed into a circuit.
\end{description}



\section*{Acknowledgment}

This paper is dedicated to the memory of T.T. Raghunathan who recently passed away on December 18, 2021.


\begin{thebibliography}{99}
\normalsize






\bibitem{g} Y. M. Borse, S. B. Dhotre, On connected splitting matroids, \textit{Southeast Asian Bull. Math.} \textbf{34} (2010) 807-811.






\bibitem{fle} H. Fleischner, Eulerian graphs and related topics, Part 1, Vol 1, North
Holland, Amsterdam (1990).

\bibitem{ml} A. D. Mills, On the cocircuits of a splitting matroid, \textit{Ars Combinatoria} \textbf{89} (2008) pp.243-253.

\bibitem{mal} P. P. Malavadkar, M.M. Shikare and S. B. Dhotre, A characterization of n-connected splitting matroids, \textit{Asian-European J. Math.} \textbf{7} (4)(2014) 1-7.

\bibitem{PMT} P. P. Malavadkar, A study of splitting operations for binary matroids and related results, Ph.D. Thesis, Savitribai Phule Pune University (2015).




\bibitem{ox} J. G. Oxley, Matroid theory, \textit{Oxford University Press}, Oxford (1992).

\bibitem{rsw} T. T. Raghunathan, M. M. Shikare, B. N. Waphare, Splitting in a
binary matroid, \textit{Discrete Mathematics} \textbf{184} (1998) 267-271. 


\bibitem{ba} M. M. Shikare, G. Azadi, Determination of bases of a splitting matroid, \textit{European J. Combin.} \textbf{24} (2003) 45-52.

\bibitem{shi}M. M. Shikare, Splitting Lemma for binary matroids, \textit{Southeast Asian Bull. Math.} 32 (2008) 151-159.




\bibitem{tut} W. T. Tutte, Lectures on matroids, \textit{J. Res. Nat. Bur. Standards} \textbf{B69} (1965) 1-47.


\bibitem{con} W. T. Tutte, Connectivity in matroids, \textit{Canad. J. Math.} \textbf{18} (1966) 1301-1324.

\bibitem{wag} D. K. Wagner, Bipartite and Eulerian minors, \textit{European J. Combin.} \textbf{74}(2018) 1-10.

\bibitem{wel} D. J. A. Welsh, Euler and bipartite matroids, \textit{J. Combin. Theory} \textbf{6} (1969) 375-377.

\bibitem{wu} Yaokun Wu, Even poset and a parity result for binary linear code, \textit{Linear Algebra and its Appl.} \textbf{418}(2006) 591-594.
\end{thebibliography}
\end{document}